\documentclass[11pt,a4paper]{article}
\usepackage{geometry}
\usepackage{tikz}
\usetikzlibrary{shapes.geometric, decorations.pathreplacing}
\usepackage{amsmath}
\usepackage{amssymb}
\usepackage{amsthm}
\usepackage{subcaption}
\usepackage{mathtools}
\usepackage{amstext}
\usepackage{orcidlink}
\usepackage{amsfonts}
\usepackage{physics}
\usepackage{caption}
\usepackage{bbm}
\usepackage{mathrsfs}
\usepackage{upgreek}
\usepackage{multirow}
\usepackage{placeins}
\usepackage{hyperref}
\usepackage{bm}
\usepackage{enumitem}
\usepackage{xcolor}
\usepackage{wasysym}
\usepackage{pifont}
\usepackage{fancyhdr}
\usepackage{amsbsy}
\usepackage{stmaryrd}
\usepackage{cases}
\usepackage{wasysym}
\usepackage{appendix}
\usepackage{sidecap}
\usepackage{graphicx}
\usepackage{float}
\usepackage{dsfont}
\usepackage{indentfirst}
\usepackage{listings}
\usepackage{empheq,etoolbox}
\usepackage{nicefrac}

\theoremstyle{definition}
\newtheorem{definition}{Definition}[section]
\newtheorem{theorem}[definition]{Theorem}
\newtheorem{proposition}[definition]{Proposition}
\newtheorem{corollary}[definition]{Corollary}
\newtheorem{lemma}[definition]{Lemma}
\newtheorem{remark}[definition]{Remark}

\DeclareMathOperator{\mi}{i}
\DeclareMathOperator{\TTR}{TTR}
\DeclareMathOperator{\TR}{TR}

\newcommand{\B}{\mathcal{B}}
\newcommand{\N}{\mathbb{N}}
\newcommand{\R}{\mathbb{R}}
\newcommand{\C}{\mathbb{C}}
\newcommand{\eremk}{\hbox{}\hfill\rule{0.8ex}{0.8ex}}
\newcommand{\K}{\mathcal{K}}

\begin{document}

\title{Convolution quadrature based on a truncated trapezoidal rule}

\author{Matteo~Ferrari\thanks{Dipartimento di Matematica, Universit\`a di Pavia,  Pavia, Italy (\href{mailto:matteo.ferrari01@unipv.it}{m.ferrari@unipv.it})}  \hspace{0.05cm} \orcidlink{0000-0002-2577-1421}}

\date{}
\maketitle

\begin{abstract}
\noindent
We study the truncated trapezoidal rule, a family of A-stable second order multistep methods parametrized by an integer $J \ge 2$, a compromise between BDF2 and the trapezoidal rule. We obtain a closed-form expression for the coefficients that minimize the principal error constant under the A-stability constraint and we derive an explicit formula for the corresponding principal error constant. The latter decreases to the optimal Dahlquist value $1/12$ as $J$ increases, and is strictly smaller than the BDF2 constant for every $J \ge 2$. We apply the truncated trapezoidal rule within the convolution quadrature framework. Its analyticity in a neighbourhood of the closed unit disk yields milder regularity and perturbation requirements than those of the trapezoidal rule, while the error constant can be made arbitrarily close to the optimal one by increasing $J$. Numerical experiments show that convolution quadrature based on the truncated trapezoidal rule remains stable under symbol perturbations, where the trapezoidal rule fails, while achieving a smaller error constant than BDF2.
\end{abstract}

\section{Introduction}

\noindent 
Convolution Quadrature (CQ), introduced by Lubich in the seminal papers \cite{Lubich1988a, Lubich1988b}, is a technique for the numerical evaluation of one-sided convolutions
\begin{equation*}
    \mathcal{K}(\partial_t) g (t) = \int_0^t \kappa(t-\tau)\, g(\tau)\, \dd \tau, \quad 0 \le t \le T,
\end{equation*}
and for the numerical solution of convolution equations $\mathcal{K}(\partial_t) g = \phi$. Both the analysis and the implementation rely on the Laplace transform $\mathcal{K}$ of the kernel $\kappa$, rather than on $\kappa$ itself. CQ has become a standard tool in the time-domain treatment of integral equations arising from wave propagation, parabolic problems, viscoelasticity, fractional diffusion, and related areas; we refer to \cite{BanjaiSayas2022} for a comprehensive overview.

\smallskip
\noindent 
The key idea behind CQ is to discretize convolutions by means of an underlying ODE solver, whose generating function $\delta(\zeta)$ enters the formulation through a contour integral representation of the convolution weights. The choice of the multistep method \cite{Lubich1988a, Lubich1988b, Lubich1994, Lubich2004} or Runge-Kutta method \cite{LubichOstermann1993, BanjaiLubich2011, BanjaiLubichMelenk2011, BanjaiFerrari2022} determines the accuracy and stability of the resulting scheme.

\smallskip
\noindent 
In this work, we focus on CQ based on multistep methods. The stability of the discrete convolution requires the underlying multistep method to be A-stable, i.e., $\Re \delta(e^{\mi \theta}) \ge 0$ for all $\theta \in [0,2\pi]$, which is restricted by Dahlquist's second barrier \cite{dahlquist1963special} to convergence order at most two. Among A-stable second-order methods, common examples are BDF2 and the trapezoidal rule. The trapezoidal rule is non-dissipative ($\Re \delta(e^{\mi \theta})=0$) and achieves the optimal principal error constant $1/12$, but its generating function $\delta_{\TR}(\zeta) = 2(1-\zeta)/(1+\zeta)$ has a pole at $\zeta = -1$. This singularity leads to strict regularity requirements on the data for the application of CQ \cite{Banjai2010, ErusluSayas2020, BanjaiFerrari2026} and increases the sensitivity to perturbations of the symbol $\mathcal{K}$ \cite[\S 2.9]{BanjaiSayas2022}. BDF2, on the other hand, has an entire generating function and therefore requires milder regularity and perturbation requirements. However, it is dissipative with principal error constant $1/3$.

\smallskip
\noindent 
To overcome these drawbacks, \cite[\S 6]{Banjai2022} introduced the
\emph{Truncated Trapezoidal Rule of order $J$} (TTR$_J$) for integer
$J \ge 2$: a family of multistep methods which is a compromise between BDF2 and the trapezoidal rule, where $\delta_{\TR}$ is replaced by a polynomial of degree $J+1$ in $\zeta - 1$ with coefficients chosen to combine the small error constant of the trapezoidal rule with the analyticity of BDF2. Optimal coefficients in \cite{Banjai2022} were computed numerically for $J=4$. Our main result is an explicit closed-form formula for the optimal coefficients for any $J \ge 2$, under the criterion of minimizing the principal error constant of the method subject to A-stability. As a consequence, we obtain the explicit dependence of the principal error constant on $J$, which converges monotonically to the Dahlquist optimum $1/12$ as $J \to \infty$. We also show that the leading dissipation term vanishes for $J$ even. Numerical experiments show that TTR$_J$-based CQ is robust under symbol perturbations, unlike the trapezoidal rule, and has a smaller error constant than BDF2.

\smallskip
\noindent 
The paper is organized as follows. In Section~\ref{sec:2}, we recall the CQ framework for hyperbolic symbols, with emphasis on the convergence and stability properties of CQ based on BDF2 and the trapezoidal rule. Section~\ref{sec:3} introduces the truncated trapezoidal rule, states the main result (Theorem~\ref{th:3.1}), and derives some consequences. Section~\ref{sec:4} is devoted to the proof of Theorem~\ref{th:3.1}, and Section~\ref{sec:5} presents numerical experiments. Auxiliary identities involving Chebyshev polynomials of the second kind are collected in the appendix.

\section{Convolution quadrature based on multistep methods} \label{sec:2}

\noindent 
We briefly recall one-sided convolutions and their CQ discretization following \cite[Ch.~2]{BanjaiSayas2022}.

\subsection{One-sided convolution}

\noindent
We denote by $\mathcal{B}(X,Y)$ the space of bounded linear operators between Banach spaces $X$ and~$Y$. For $\mu \in \R$, the space $\mathcal{A}(\mu; \mathcal{B}(X,Y))$ of \textit{hyperbolic symbols} consists of analytic functions $\K :~\C_+ \to \mathcal{B}(X,Y)$, $\C_+ = \{ s \in \C : \Re s > 0\}$, satisfying
\begin{equation*}
	\| \K(s) \|_{\mathcal{B}(X,Y)} \le C_{\K}(\Re s)\, |s|^\mu, \quad s \in \C_+,
\end{equation*}
where $C_\K: \R^+ \to \R^+$ is a non-increasing function polynomially bounded close to $0^+$. Such symbols are the Laplace transforms of causal tempered distributions $\kappa$ with values in $\mathcal{B}(X,Y)$. We  denote by $\K(\partial_t) g$ the associated convolution acting on smooth causal data $g: \R \to X$
\begin{equation*}
    \K(\partial_t) g(t) = \int_0^t \kappa(t-\tau) g(\tau) \dd \tau, \qquad \text{where } \kappa = \mathcal{L}^{-1}\{\K\} \text{~is the inverse Laplace transform of $\K$}
\end{equation*}
when $\mu < -1$, and 
\begin{equation*}
    \K(\partial_t) g(t) = \partial_t^m \K_m(\partial_t) g(t), \qquad \text{where } \K_m(s) = s^{-m} \K(s)
\end{equation*}
when $\mu \geq -1$ (with $m$ any integer such that $\mu - m < -1$).

\smallskip
\noindent 
Given a linear $k$-step method with generating function $\delta(\zeta)$, A-stable and consistent of order $p$, the CQ approximation of $\K(\partial_t) g$ at $t_n = n\Delta t$ is the discrete convolution
\begin{equation} \label{eq:1}
	\K(\partial_t^{\Delta t})\, g(t_n) = \sum_{j=0}^n \omega_{n-j}(\K)\, g(t_j), \quad \text{with weights} \quad \K\!\left( \frac{\delta(\zeta)}{\Delta t} \right) = \sum_{j=0}^\infty \omega_j(\K)\, \zeta^j,
	\quad |\zeta| < 1.
\end{equation}
By the Cauchy integral formula, the weights admit the contour representation
\begin{equation} \label{eq:2}
    \omega_j(\K) = \frac{1}{2\pi \mi}\oint_{|\zeta| = \lambda} \K\!\left( \frac{\delta(\zeta)}{\Delta t} \right) \zeta^{-j-1}\, \dd \zeta, \qquad \lambda \in (0,1),
\end{equation}
which, after discretization by the trapezoidal rule on the contour, yields the efficient algorithm \cite{Lubich1988b} based on the FFT for the simultaneous evaluation of the weights.

\smallskip
\noindent
Two classical second-order A-stable methods are BDF2 and the trapezoidal rule,
\begin{equation} \label{eq:3}
    \delta_{\mathrm{BDF2}}(\zeta) = (1-\zeta) + \frac{1}{2}(1-\zeta)^2,
	\qquad \delta_{\TR}(\zeta) = 2\, \frac{1-\zeta}{1+\zeta},
\end{equation}
with principal error constants $1/3$ and $1/12$, respectively. They differ in an analyticity property: $\delta_{\mathrm{BDF2}}$ is entire, whereas $\delta_{\TR}$ has a pole at $\zeta = -1$. In particular, $\delta_{\mathrm{BDF2}}$ satisfies the property
\begin{equation} \label{eq:4}
    \delta(\zeta) \text{ is analytic in a neighbourhood of } \{|\zeta| \le 1\}.
\end{equation}
The performance of CQ depends on whether the generating function satisfies \eqref{eq:4}. The trapezoidal rule, despite its optimal error constant, is penalized in two complementary ways with respect to BDF2-like methods.

\paragraph{Convergence.} For $r \in \N$ we introduce the space
\begin{equation*}
    W_0^r(X) = \bigl\{ g \in C^{r-1}(\R; X) : g \text{ causal}, \; g^{(r)} \in L^1_{\mathrm{loc}}(\R; X), \; g^{(j)}(0) = 0 \text{ for } j = 0, \ldots, r-1 \bigr\}.\footnote{For data $g$ defined on $[0,T]$ with the required vanishing derivatives at $t = 0$, the extension to $W_0^r(X)$ is obtained by setting $g \equiv 0$ for $t < 0$ and by a Taylor polynomial of degree $r-1$ centered at $T$ for $t > T$.}
\end{equation*}

\noindent 
For $\K \in \mathcal{A}(\mu; \B(X,Y))$ with $\mu \ge 0$ and $g \in W_0^r(X)$, there exists a constant $C > 0$, depending on $t$, such that  (see \cite[Theorem 3.2]{Lubich1994}, \cite[Theorem 4]{Banjai2010}, and \cite[Theorem 2.1]{ErusluSayas2020})
\begin{equation} \label{eq:5}
    \| \K(\partial_t^{\Delta t}) g(t) - \K(\partial_t) g(t) \|_Y \le C\, \Delta t^p \int_0^t \| g^{(r)}(\tau) \|_X\, \dd \tau,
\end{equation}
provided that
\begin{equation} \label{eq:6}
	r > \mu + p + 2 \text{ under \eqref{eq:4}}, \qquad
	r > \max\{2\mu + 3, \mu + 4\} \text{ for the trapezoidal rule}.
\end{equation}

\paragraph{Stability under perturbations.} In practice the symbol $\K$ is rarely available exactly. Suppose we have computed a perturbed symbol $\K_\varepsilon \in \mathcal{A}(\mu_1,\mathcal{B}(X,X'))$ satisfying
\begin{equation} \label{eq:7}
    \| \K(s) - \K_\varepsilon(s) \|_{\mathcal{B}(X,X')} \le C_p(\Re s)\, |s|^{\mu'}\, \varepsilon, \qquad s \in \C_+
\end{equation}
where $\mu' \in \R$ and $C_p : (0,\infty) \to (0,\infty)$ is a non-increasing function polynomially bounded as $\Re s \to~0^+$. Such perturbations arise naturally from spatial discretizations, e.g., quadrature errors in a Galerkin scheme. 

\noindent
Let us assume that there exists $\zeta : \C_+ \to \C$, with $|\zeta(s)| =1$ for $s \in \C_+$,  and a function $D : (0,\infty) \to (0,\infty)$ non-decreasing and polynomially bounded as $\Re s \to 0^+$, such that
\begin{equation} \label{eq:8bis}
    \Re \langle \zeta(s)\varphi,\, \K(s)\varphi \rangle_{X \times X'} \;\ge\; D(\Re s)\, |s|^{-\mu_2}\, \|\varphi\|_X^2, \qquad \text{for all~} \varphi \in X,\ s \in \C_+.
\end{equation}
Then, the convolution equation $\K_\varepsilon(\partial_t^{\Delta t}) \varphi = g$ admits a unique solution and the perturbation error is $\mathcal{O}(\varepsilon)$, provided $\varepsilon$ is small enough. The threshold depends on whether \eqref{eq:4} holds:
\begin{equation*}
	\varepsilon \lesssim \Delta t^{\mu'+\mu_2} \text{ under \eqref{eq:4}}, \qquad
	\varepsilon \lesssim \Delta t^{2(\mu'+\mu_2)} \text{ for the trapezoidal rule}.
\end{equation*}
The gap reflects the different growth of the discrete symbol $s^{\Delta t}(\zeta) = \delta(\zeta)/\Delta t$, which is $\mathcal{O}(\Delta t^{-1})$ under \eqref{eq:4}, but is $\mathcal{O}(\Delta t^{-2})$ for the trapezoidal rule, due to the pole of $\delta_{\TR}$ at $\zeta = -1$. We refer to \cite[Theorem 2.38]{BanjaiSayas2022} for the precise statement.

\noindent
The contrast in Table~\ref{tab:1} compares BDF2 and the trapezoidal rule. 
The trapezoidal rule achieves the optimal error constant $1/12$ but, due to the pole of $\delta_{\TR}$ at $\zeta = -1$, requires data regularity scaling as $2\mu$ and tolerates perturbations only of order $\Delta t^{2(\mu'+\mu_2)}$. BDF2 satisfies \eqref{eq:4}, hence relaxes both requirements, but has a larger error constant. The truncated trapezoidal rule, introduced in the next section, combines \eqref{eq:4} with an error constant arbitrarily close to $1/12$.

\begin{table}[ht]
\centering
\renewcommand{\arraystretch}{1.4}
\begin{tabular}{l|c|c}
&  BDF2 & trapezoidal rule \\
\hline
    principal error constant
	& $1/3$
	& $1/12$ \\
    regularity for \eqref{eq:5}
	& $r > \mu + 4$
	& $r > \max\{2\mu + 3,\, \mu + 4\}$ \\
    perturbation threshold
	& $\varepsilon \lesssim \Delta t^{\mu'+\mu_2}$
	& $\varepsilon \lesssim \Delta t^{2(\mu'+\mu_2)}$ \\
\end{tabular}
\caption{CQ based on BDF2 and the trapezoidal rule: a comparison.}
\label{tab:1}
\end{table}

\section{\texorpdfstring{Truncated trapezoidal rule of order $J$}{}} \label{sec:3}

\noindent 
For any integer $J \ge 2$, let us consider the generating function $\delta_{\TTR_J} : \C \to \C$ of a linear multistep method, defined as
\begin{equation} \label{eq:8}
    \delta_{\TTR_J}(\zeta) = (1-\zeta) + \frac{1}{2}(1-\zeta)^2 + \sum_{\ell=2}^J 2^{-\ell} c_\ell^J (1-\zeta)^{\ell+1},
\end{equation}
with coefficients $\{c_\ell^J \}_{\ell=2}^J \subset \R$. We call the associated multistep method the \textit{truncated trapezoidal rule of order $J$}. It was introduced in \cite[\S 6]{Banjai2022} (see also \cite[\S 4.13]{BanjaiSayas2022}) as a compromise between the BDF2 method and the trapezoidal rule.

\noindent 
Choosing any integer $J \ge 2$ and $c_\ell^J=0$ for all $\ell=2,\ldots,J$, yields the BDF2 method (recall \eqref{eq:3}), whereas setting $c_\ell^J=1$ for all $\ell=2,\ldots,J$ and letting $J \to +\infty$ recovers the trapezoidal rule. Indeed, we have
\begin{equation*}
    (1-\zeta) + \frac{1}{2}(1-\zeta)^2 + \sum_{\ell=2}^\infty 2^{-\ell} (1-\zeta)^{\ell+1} = (1-\zeta)\sum_{\ell=0}^\infty 2^{-\ell} (1-\zeta)^{\ell} = 2 \frac{1-\zeta}{1+\zeta} = \delta_{\TR}(\zeta).
\end{equation*}
We consider the restriction of $\delta_{\TTR_J}$ to the boundary of the unit circle. We have, for $\theta \in [0,2\pi]$,
\begin{equation} \label{eq:9}
    \delta_{\TTR_J}(e^{\mi \theta}) = \sum_{k=1}^{\infty} \mi^k C_k^J \theta^k = -\mi \theta + \mi \left( -\frac{1}{3} + \frac{c_2^J}{4} \right)\theta^3 + \left(\frac{1}{4} - \frac{3 c_2^J}{8} + \frac{c_3^J}{8} \right) \theta^4 + \sum_{k=5}^\infty \mi^k C_k^J \theta^k, 
\end{equation}
for some coefficients $C_k^J \in \R$. In particular, we have
\begin{equation} \label{eq:10}
    C_1^J = -1, \quad\quad C_2^J = 0, \quad\quad C_3^J = \frac{1}{3} - \frac{c_2^J}{4}, \quad\quad C_4^J = \frac{1}{4} - \frac{3c_2^J}{8} + \frac{c_3^J}{8}.
\end{equation}
The coefficient $C_3^J$ is the \textit{principal error constant} of the method. Indeed, since $C_2^J = 0$, the truncated trapezoidal rule is of order $2$, and smaller $|C_3^J|$ corresponds to a smaller leading-order truncation error. As reference values, the BDF2 method has $c_2^J=0$ and yields $C_3^{\mathrm{BDF2}} = 1/3$, whereas the trapezoidal rule has $c_2^J=1$ and yields $C_3^{\TR} = 1/12$. For application to convolution quadrature, we also need A-stability. We want to ensure that
\begin{itemize}
\item the principal error constant $|C_3^J|$ is small,
\item the corresponding method is \textit{A-stable}, meaning that 
\begin{equation} \label{eq:11}
    \text{Re}\bigl(\delta_{\TTR_J}(e^{\mi \theta})\bigr) \ge 0 \quad \text{for all } \theta \in [0,2\pi].
\end{equation}
\end{itemize}
It is known that the trapezoidal rule is the second-order A-stable method with the optimal principal error constant  \cite{dahlquist1963special}.

\smallskip
\noindent 
Our goal is, given an integer $J \ge 2$, to find optimal coefficients $\{c_\ell^J\}_{\ell=2}^J$. We define the coefficients $\{c_\ell^J\}_{\ell=2}^J$ to be \textit{optimal} if the A-stability condition \eqref{eq:11} is satisfied and $c_2^J$ is maximized. This latter corresponds to small $|C_3^J|$. The main result of this paper is an explicit formula for the optimal choice of coefficients $\{c_\ell^J\}^J_{\ell=2}$ for any given integer $J \ge 2$.

\begin{theorem} \label{th:3.1}
For a given integer $J \ge 2$, the optimal parameters $\{c_\ell^J\}_{\ell=2}^J$ of the TTR$_J$ are given by 
\begin{equation} \label{eq:12}
\begin{aligned}
    c_2^J & = \frac{2 \cos x_J}{1+\cos x_J},
    \\ c_\ell^J & = (-1)^{J+1-\ell}\, \frac{2^{\ell-1}(1-\cos x_J)}{J+1} \sum_{k=1}^{J-2} (-1)^k \binom{J-k-2}{\ell-3} U'_k(\cos x_J), \quad\quad \ell \ge 3.
\end{aligned}
\end{equation}
where $x_J = \displaystyle{\frac{\pi}{J+1}}$ and  $U_k$ denotes the $k$-th Chebyshev polynomial of the second kind.
\end{theorem}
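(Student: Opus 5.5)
The plan is to turn A-stability into a positivity condition on a trigonometric polynomial and then solve the resulting extremal problem with the Fejér--Riesz theorem. Put $\zeta = e^{\mi\theta}$. Since $\delta_{\TTR_J}$ is a real polynomial of degree $J+1$ in $\zeta$, its real part on the unit circle is a cosine polynomial,
\begin{equation*}
R(\theta) := \Re \delta_{\TTR_J}(e^{\mi\theta}) = \sum_{k=0}^{J+1} a_k \cos k\theta .
\end{equation*}
Each $a_k$ depends affinely on $c_2^J,\dots,c_J^J$, and this dependence can be read off from \eqref{eq:8} by expanding $(1-\zeta)^{\ell+1}$ binomially. By \eqref{eq:9}--\eqref{eq:10}, $R$ vanishes to fourth order at $\theta = 0$. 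Writing $t = 1-\cos\theta \in [0,2]$, we therefore get $R = t^2 P(t)$ with $P$ a real polynomial of degree $J-1$. Condition \eqref{eq:11} is then equivalent to $P \ge 0$ on $[0,2]$, equivalently $R \ge 0$ as a trigonometric polynomial.

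\textbf{Parametrization.} The free data are the $J-1$ coefficients $c_\ell^J$, and these match the $J$ coefficients of $P$ up to one affine normalization coming from the fixed part $(1-\zeta)+\tfrac12(1-\zeta)^2$. I would first check this count and show that the map $(c_2^J,\dots,c_J^J) \mapsto P$ is an affine bijection onto polynomials of degree $\le J-1$ satisfying one linear normalization $L_0(P) = \text{const}$. A natural candidate is a fixed value or moment tied to the $\theta^4$ coefficient $C_4^J$, or a top-degree constraint. I would then express $c_2^J$ as a second linear functional $L_1(P)$. The optimization problem becomes: maximize $L_1(P)$ subject to $L_0(P)$ fixed and $P \ge 0$ on $[0,2]$.

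\textbf{Extremal problem.} Reverting to $\theta$, I would write $t^2 P(t) = |h(e^{\mi\theta})|^2$ by Fejér--Riesz, where $h$ is a real polynomial of degree $J+1$ containing the factor $(1-\zeta)^2$. So $h(\zeta) = (1-\zeta)^2 g(\zeta)$ with $\deg g = J-1$. In terms of the coefficient vector $\mathbf{g} \in \R^J$, both functionals become quadratic forms. The autocorrelations $\sum_j g_j g_{j+m}$ enter, and the constraint and objective should reduce to $\mathbf{g}^T \mathbf{g}$ and $\mathbf{g}^T T \mathbf{g}$, with $T$ the symmetric tridiagonal Toeplitz matrix having $\tfrac12$ off the diagonal (possibly after shifts by multiples of the identity). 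The optimum is then a Rayleigh quotient. The largest eigenvalue of $T$ is $\cos x_J$ with $x_J = \pi/(J+1)$, and the eigenvector has components $\sin(k x_J)/\sin x_J = U_{k-1}(\cos x_J)$. Substituting back should give $c_2^J = 2\cos x_J/(1+\cos x_J)$; the factor $1+\cos x_J$ would come from the normalization.

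\textbf{Recovering the remaining coefficients.} With the optimal $g$ in hand, $P$ is determined, hence $R$. I would invert the affine map of the parametrization step to obtain $c_\ell^J$ for $\ell \ge 3$. Expanding $|g|^2$ in powers of $(1-\zeta)$, that is, re-expanding in powers of $t$, produces alternating binomial sums. The derivative $U_k'(\cos x_J)$ should appear through the identity $\sum_j j\,U_{j}$-type sums, or by differentiating the eigen-relation with respect to the cosine variable; these would be the Chebyshev identities collected in the appendix. That accounts for the factor $2^{\ell-1}(1-\cos x_J)/(J+1)$ and the $\binom{J-k-2}{\ell-3}$ kernel.

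\textbf{Main obstacle.} I expect the hardest part to be the exact identification of $L_0$ and $L_1$ as the quadratic forms $\mathbf{g}^T\mathbf{g}$ and $\mathbf{g}^T T\mathbf{g}$, and in particular getting the normalization right so the extremum lands at exactly $2\cos x_J/(1+\cos x_J)$. I would do this first and test it against $J = 2$, where $P$ is linear and the optimum can be computed by hand. The final combinatorial simplification to \eqref{eq:12} is routine but lengthy.
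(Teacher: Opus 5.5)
Your plan is correct, but it follows a different route from the paper's. The paper writes $\Re\delta_{\TTR_J}(e^{\mi\theta})=\sin^4(\theta/2)\,T^J(\theta)$ with $T^J$ as in \eqref{eq:24}, and then \emph{cites} the Fej\'er/Egerv\'ary--Sz\'asz theorem (Theorem~\ref{th:4.1} with $n=J-1$). That theorem supplies both the bound $2c_2^J/(2-c_2^J)\le 2\cos x_J$ and the explicit extremal polynomial \eqref{eq:22}. What remains is the identity \eqref{eq:30} and the binomial inversion \eqref{eq:26}. Your Fej\'er--Riesz factorization combined with the spectrum of the tridiagonal Toeplitz matrix amounts to a proof of Theorem~\ref{th:4.1} itself. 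This buys several things: it is self-contained, it gives uniqueness of the optimizer directly (the extreme eigenvalue is simple), and it needs no preliminary bound $c_2^J\le 1$ to divide by $4-2c_2^J$. The cost is that you must sum the autocorrelations of the eigenvector yourself.

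When you fill in the details, note that your candidate normalizations (tied to $C_4^J$ or to the top degree) are not the right ones. The $c_\ell^J$ with $\ell\ge 3$ only affect the coefficients of $\cos k\theta$, $k\ge 2$, in $T^J$, while the constant and $\cos\theta$ coefficients are $4-2c_2^J$ and $-4c_2^J$. Factor $T^J=|g(e^{\mi\theta})|^2$ directly (it has degree $J-1$, so $\mathbf g\in\R^J$) and write $r_m$ for the autocorrelations. Then the constraint is $r_0-r_1=\mathbf g^T(I-S)\mathbf g=4$ and the objective is $c_2^J=-r_1/2=-\tfrac12\mathbf g^TS\mathbf g$, where $S$ has $\tfrac12$ on the off-diagonals. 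Hence $c_2^J=-2\mu/(1-\mu)$, with $\mu$ the Rayleigh quotient of $S$. This is decreasing in $\mu$, so the optimum sits at the \emph{smallest} eigenvalue $-\cos x_J$, not the largest. The corresponding eigenvector alternates, $g_k\propto(-1)^k\sin(kx_J)$; this is the analogue of the paper's choice $\psi=\pi$ and the origin of the signs $(-1)^k$. Its autocorrelations are proportional to $(-1)^m\bigl((J-m)\cos(mx_J)+\sin((m+1)x_J)/\sin x_J\bigr)$. The derivative $U'_{J-m}(\cos x_J)$ then enters through the trigonometric identity \eqref{eq:30}, not through differentiating an eigen-relation. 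The rest is the inversion \eqref{eq:25}--\eqref{eq:26}.
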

\noindent 
In Section \ref{sec:4}, we provide a proof for Theorem \ref{th:3.1}. Here, we report some consequences.

\noindent 
We recall that for $x \in [0,2\pi]$ and $k \in \N$, we have (see, e.g., \cite[Def. 1.2]{Mason2002})
\begin{equation} \label{eq:13}
    U_k(\cos x) = \frac{\sin ((k+1) x)}{\sin x},
\end{equation}
and differentiating with respect to $x$, we get
\begin{equation} \label{eq:14}
    U'_k(\cos x) = - \frac{1}{\sin x} \left.\frac{\dd U_k(y)}{\dd y}\right|_{y=\cos x} = \frac{\sin((k+1)x)\cos x - (k+1)\cos((k+1)x)\sin x}{\sin^3 x}.
\end{equation}
\begin{remark}
In Table \ref{tab:2}, we report the values obtained with \eqref{eq:12} for small values of $J$. 

\begin{table}[H]
\centering
\renewcommand{\arraystretch}{2.5} 
\begin{tabular}{|c|c|c|c|c|c|}
\hline
$J$ & $c_2^J$ & $c_3^J$ & $c_4^J$ & $c_5^J$ & $c_6^J$ \\ \hline
$2$ & $\displaystyle{\frac{2}{3}}$ & & & & \\ \hline
$3$ & $2\sqrt{2}-2$ & $2-\sqrt{2}$ & & & \\ \hline
$4$ & $\displaystyle{\frac{2\sqrt{5}}{5}}$ & $\displaystyle{\frac{6\sqrt{5}-10}{5}}$ & $\displaystyle{\frac{12-4\sqrt{5}}{5}}$ & & \\ \hline
$5$ & $4\sqrt{3}-6$ & $\displaystyle{\frac{44-24\sqrt{3}}{3}}$ & $\displaystyle{\frac{24\sqrt{3}-40}{3}}$ & $\displaystyle{\frac{16-8\sqrt{3}}{3}}$ & \\ \hline
$6$ & $0.9479$ & $0.8437$ & $0.7998$ & $0.2734$ & $0.9054$ \\ \hline
\end{tabular}
\caption{Optimal parameters $\{c_\ell^J\}_{\ell=2}^J$.}
\label{tab:2}
\end{table}

\noindent 
The values for $J=4$ correspond to
\begin{equation*}
    c_2^4 \approx 0.894427190999916, \quad c_3^4 \approx  0.683281572999748, \quad c_4^4 \approx 0.611145618000168
\end{equation*}
obtained numerically in \cite[\S 6]{Banjai2022}.
\eremk
\end{remark}
\begin{corollary}
For a given integer $J \ge 2$, the optimal coefficient $c_2^J$ is given by
\begin{equation*}
    c_2^J = 1 - \tan^2\left(\frac{x_J}{2} \right),
\end{equation*}
where $x_J = \displaystyle{\frac{\pi}{J+1}}$. The corresponding principal error constant $|C_3^J|$ is 
\begin{equation} \label{eq:15}
    |C_3^J| = \frac{1}{12} + \frac{1}{4}\tan^2\left(\frac{x_J}{2}\right).
\end{equation}
\end{corollary}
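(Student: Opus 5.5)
The statement to prove is the Corollary just above. Everything in it follows from Theorem~\ref{th:3.1} together with the expression \eqref{eq:10} for $C_3^J$. The plan is to rewrite the closed form of $c_2^J$ from \eqref{eq:12} with a half-angle identity, and then substitute the result into $C_3^J = \frac13 - \frac{c_2^J}{4}$. None of the more intricate formulas for $c_\ell^J$ with $\ell \ge 3$ are needed.

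\textbf{Step 1: rewrite $c_2^J$.} Theorem~\ref{th:3.1} gives $c_2^J = \frac{2\cos x_J}{1+\cos x_J}$ with $x_J = \pi/(J+1) \in (0,\pi/3]$. Using $1+\cos x = 2\cos^2(x/2)$ and $\cos x = \cos^2(x/2) - \sin^2(x/2)$, we get
\begin{equation*}
    \frac{2\cos x_J}{1+\cos x_J} = \frac{\cos^2(x_J/2)-\sin^2(x_J/2)}{\cos^2(x_J/2)} = 1 - \tan^2\left(\frac{x_J}{2}\right).
\end{equation*}
This is well defined because $\cos(x_J/2) > 0$ on the given range.

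\textbf{Step 2: substitute into $C_3^J$.} By \eqref{eq:10},
\begin{equation*}
    C_3^J = \frac13 - \frac14\left(1-\tan^2\left(\frac{x_J}{2}\right)\right) = \frac1{12} + \frac14\tan^2\left(\frac{x_J}{2}\right).
\end{equation*}
This quantity is strictly positive, so $|C_3^J| = C_3^J$, which is exactly \eqref{eq:15}.

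\textbf{Sanity checks.} For $J=2$ we have $x_2 = \pi/3$ and $\tan^2(\pi/6) = 1/3$, so $c_2^2 = 2/3$, matching Table~\ref{tab:2}, and $C_3^2 = 1/12 + 1/12 = 1/6 < 1/3$. As $J \to \infty$, $x_J \to 0$, so $C_3^J$ decreases monotonically to $1/12$ because $\tan$ is increasing on $(0,\pi/6]$.

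The only genuine difficulty is hidden in Theorem~\ref{th:3.1} itself, namely that the stated $c_2^J$ is the maximal value compatible with A-stability; here that result is simply invoked. Within the corollary the single point needing care is the sign check that removes the absolute value, and it is immediate.
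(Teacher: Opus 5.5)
Your proposal is correct and follows essentially the same route as the paper. Both arguments rewrite $c_2^J = 2\cos x_J/(1+\cos x_J)$ from Theorem~\ref{th:3.1} with a half-angle identity and then substitute into $C_3^J = \frac13 - \frac{c_2^J}{4}$ from \eqref{eq:10}; your explicit positivity check, which justifies dropping the absolute value, is a small addition the paper leaves implicit.
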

\begin{proof}
Using the identity $(1-\cos x)/(1+\cos x) = \tan^2 (x/2)$ and \eqref{eq:12}, we readily obtain
\begin{equation*}
    c_2^J = 1 - \frac{1-\cos x_J}{1+\cos x_J} = 1 - \tan^2\left(\frac{x_J}{2}\right).
\end{equation*}
Finally, we substitute $c_2^J$ into $|C_3^J|$ in \eqref{eq:10} to deduce \eqref{eq:15}.
\end{proof}
\noindent 
This result is consistent with Dahlquist's classical result \cite{dahlquist1963special}. Indeed, from \eqref{eq:15}, for all $J \ge 2$, we have $|C_3^J| > 1/12$, and in the limit as $J \to +\infty$, we recover $|C_3^J| \to 1/12$. Moreover, this alternative expression of $c_2^J$ shows that $c_2^J$ increases as $J$ increases and $c_2^J \to 1$. 

\smallskip
\noindent 
In the next corollary, we show that even though the methods associated with $\delta_{\TTR_J}$ are not conservative, i.e., $\Re(\delta_{\TTR_J}(e^{\mi \theta})) > 0$, the dissipation error is of order at least $6$ when $J$ is even.

\begin{corollary}
For a given integer $J \ge 2$, the optimal $c_3^J$ is given by
\begin{equation} \label{eq:16}
    c_3^J = 3 c_2^J - 2 + \frac{1-(-1)^J}{J+1}(1-c_2^J)(2-c_2^J).
\end{equation}
In particular, $C_4^J = 0$ if $J$ is even.
\end{corollary}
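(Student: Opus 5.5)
We specialize formula \eqref{eq:12} of Theorem~\ref{th:3.1} to $\ell=3$ and evaluate the resulting alternating sum in closed form. With $\ell=3$ the binomial is $\binom{J-k-2}{0}=1$ for $k=1,\dots,J-2$, so
\begin{equation*}
    c_3^J = (-1)^{J}\,\frac{4(1-\cos x_J)}{J+1}\sum_{k=1}^{J-2}(-1)^k U_k'(\cos x_J).
\end{equation*}
Writing $x=x_J$, the task reduces to computing $S_J=\sum_{k=1}^{J-2}(-1)^kU_k'(\cos x)$ at $x=\pi/(J+1)$.

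For the sum we use the generating function $\sum_{k\ge0}U_k(y)t^k=(1-2yt+t^2)^{-1}$. A cleaner finite route is the alternating-sum identity
\begin{equation*}
    \sum_{k=0}^{n}(-1)^kU_k(y)=\frac{U_n(y)+(-1)^n U_{n+1}(y)\,\cdot 0 + \dots}{2(1+y)},
\end{equation*}
which we derive concretely from $U_{k+1}+U_{k-1}=2yU_k$. Summing $(-1)^k(U_{k+1}+U_{k-1})$ telescopically gives
\begin{equation*}
    2(1+y)\sum_{k=0}^n(-1)^kU_k = 1+(-1)^n\bigl(U_n+U_{n+1}\bigr).
\end{equation*}
We then differentiate in $y$, take $n=J-2$, and evaluate at $y=\cos x$ with \eqref{eq:13}--\eqref{eq:14}. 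The relevant values are
\begin{equation*}
    U_{J-2}(\cos x)=\frac{\sin((J-1)x)}{\sin x}, \qquad U_{J-1}(\cos x)=\frac{\sin(Jx)}{\sin x}=1,
\end{equation*}
using $\sin(Jx)=\sin(\pi-x)$, and $\sin((J-1)x)=\sin 2x$, so $U_{J-2}=2\cos x$. Their derivatives follow from \eqref{eq:14}, where $\cos((J+1-m)x)=-\cos(mx)$ simplifies the expressions. The index $k=0$ contributes $U_0'=0$ and can be dropped.

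We then substitute into $c_3^J$ and convert to $c_2^J$ via $\cos x = c_2/(2-c_2)$, which follows from $c_2=2\cos x/(1+\cos x)$ in \eqref{eq:12}. This gives $1-\cos x = 2(1-c_2)/(2-c_2)$ and $1+\cos x=2/(2-c_2)$. The target has a $J$-independent part $3c_2-2$ and a part $\frac{1-(-1)^J}{J+1}(1-c_2)(2-c_2)$, and the parity factor should come from $(-1)^J$ times $(-1)^{J-2}$ terms combined with the constant $1$ in the telescoped identity. We expect the $(J+1)$-proportional part of $U'_{J-1}$ and $U'_{J-2}$, i.e. the $(k+1)\cos((k+1)x)$ term in \eqref{eq:14}, to cancel the prefactor $1/(J+1)$ and produce $3c_2-2$. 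The leftover $(1-(-1)^J)$ term should come from the derivative of the boundary term divided by $(1+y)^2$.

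The main obstacle is this trigonometric bookkeeping: tracking the terms of $\frac{d}{dy}\bigl[(1+(-1)^n(U_n+U_{n+1}))/(2(1+y))\bigr]$ at $y=\cos x$ and recognizing the result as a rational function of $\cos x$. As a safeguard we would check the result against Table~\ref{tab:2} for $J=2,3,4$; for instance, $J=2$ gives an empty sum (no $c_3$), and for $J=4$ we verify $3\cdot\frac{2\sqrt5}{5}-2=\frac{6\sqrt5-10}{5}$.

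Finally, for $J$ even the formula reads $c_3^J=3c_2^J-2$. Plugging into \eqref{eq:10} gives $C_4^J=\frac14-\frac{3c_2^J}{8}+\frac{3c_2^J-2}{8}=0$.
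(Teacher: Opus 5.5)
Your route is the paper's own: specialize \eqref{eq:12} to $\ell=3$, differentiate the telescoped alternating-sum identity (yours with $n=J-2$ is \eqref{eq:33}), evaluate at $y=\cos x_J$ using $U_{J-1}=1$ and $U_{J-2}=2\cos x_J$, then convert to $c_2^J$. Your closing computation of $C_4^J$ is also correct. The gap is that you stop where the actual content of the proof lies. The closed form of $\sum_{k=1}^{J-2}(-1)^kU'_k(\cos x_J)$ (the paper's Lemma~\ref{lem:A.5}) is never derived, only anticipated with ``we expect'' and ``should come from''. Checks against Table~\ref{tab:2} cannot substitute for it for general $J$. The first display with ``$\cdot 0+\dots$'' is a placeholder and should be deleted. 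Your predicted bookkeeping is also not literally right. The $(k+1)\cos((k+1)x)$ terms of \eqref{eq:14} carry the factors $J$ and $J-1$, not $J+1$. And the parity term does not come from the quotient-rule term alone.

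Here is the missing step. Put $c=\cos x_J$ and $F(y)=1+(-1)^J\bigl(U_{J-1}(y)+U_{J-2}(y)\bigr)$. Then the sum equals $F'(c)/(2(1+c))-F(c)/(2(1+c)^2)$, with $F(c)=1+(-1)^J(1+2c)$. From \eqref{eq:14}, using $\sin(Jx_J)=\sin x_J$, $\cos(Jx_J)=-c$, $\sin((J-1)x_J)=\sin 2x_J$ and $\cos((J-1)x_J)=-\cos 2x_J$, you get $U'_{J-1}(c)=\frac{(J+1)c}{1-c^2}$ and $U'_{J-2}(c)=\frac{2Jc^2-J+1}{1-c^2}$. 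Since $2c^2+c-1=(2c-1)(1+c)$,
\[
U'_{J-1}(c)+U'_{J-2}(c)=\frac{J(2c-1)+1}{1-c}=\frac{(J+1)(2c-1)}{1-c}+2 .
\]
The rewriting $J(2c-1)+1=(J+1)(2c-1)+2(1-c)$ is what isolates the part that cancels $1/(J+1)$. The leftover $(-1)^J/(1+c)$ combines with $-(-1)^J(1+2c)/(2(1+c)^2)$ to give $(-1)^J/(2(1+c)^2)$. The constant $1$ in $F$ contributes $-1/(2(1+c)^2)$. Hence
\[
\sum_{k=1}^{J-2}(-1)^kU'_k(c)=\frac{(-1)^J(J+1)(2c-1)}{2(1-c)(1+c)}+\frac{(-1)^J-1}{2(1+c)^2}.
\]
Multiplying by $(-1)^J\,4(1-c)/(J+1)$ gives $c_3^J=\frac{2(2c-1)}{1+c}+\frac{1-(-1)^J}{J+1}\,\frac{2(1-c)}{(1+c)^2}$. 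With $c_2^J=2c/(1+c)$ one has $3c_2^J-2=\frac{2(2c-1)}{1+c}$ and $(1-c_2^J)(2-c_2^J)=\frac{2(1-c)}{(1+c)^2}$, which is \eqref{eq:16}.
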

\begin{proof}
We evaluate the optimal $c_3^J$ with \eqref{eq:12} by substituting $\ell=3$. We obtain
\begin{equation} \label{eq:17}
    c_3^J = (-1)^J \frac{4(1-\cos x_J)}{J+1} \sum_{k=1}^{J-2} (-1)^k U'_k(\cos x_J).
\end{equation}
In Lemma \ref{lem:A.5} below, we show that
\begin{equation*}
    \sum_{k=1}^{J-2} (-1)^k U'_k(\cos x_J) = \frac{(-1)^J (J+1)(2\cos x_J - 1)}{2(1-\cos x_J)(1+\cos x_J)} + \frac{(-1)^J - 1}{2(1+\cos x_J)^2}.
\end{equation*}
Thus, inserting in \eqref{eq:17}, we compute
\begin{equation*}
\begin{aligned}
    c_3^J = \frac{2(2\cos x_J-1)}{1+\cos x_J} + \frac{1-(-1)^J}{(J+1)} \frac{2(1-\cos x_J)}{(1+\cos x_J)^2}.
\end{aligned}
\end{equation*}
The first term, recalling \eqref{eq:12}, reads
\begin{equation*}
    \frac{2(2\cos x_J - 1)}{1+\cos x_J} = 3\left(\frac{2\cos x_J}{1+\cos x_J}\right) - 2 = 3c_2^J - 2.
\end{equation*}
For the second term, we observe that
\begin{equation*}
    (1-c_2^J)(2-c_2^J) = \frac{2(1-\cos x_J)}{(1+\cos x_J)^2}.
\end{equation*}
Substituting these identities, we conclude
\begin{equation*}
    c_3^J = 3c_2^J - 2 + \frac{1-(-1)^J}{J+1}(1-c_2^J)(2-c_2^J).
\end{equation*}
Finally, recalling \eqref{eq:10}, we deduce $C_4^J=0$ if $J$ is even.
\end{proof}
\begin{remark}
From \eqref{eq:12} and \eqref{eq:16}, as $J \to +\infty$, we have
$c_2^J \to 1, \quad c_3^J \to 1$. Moreover, by \eqref{eq:15}, the principal error constants $|C_3^J|$ converge to $1/12$, the optimal value among A-stable second-order multistep methods, achieved only by the trapezoidal rule \cite{dahlquist1963special}. In this sense, the family $\delta_{\TTR_J}$ approaches $\delta_{\TR}$ as $J \to \infty$.

\smallskip
\noindent 
Nevertheless, the convergence is non-uniform in the coefficients. Indeed, from \eqref{eq:12} with $\ell = J$, we have
\begin{equation*}
    c_J^J = \frac{2^{J-1}(1-\cos x_J)}{J+1} U'_1(\cos x_J) = \frac{2^J(1-\cos x_J)}{J+1} \to +\infty 
\end{equation*}
as $J \to +\infty$.
\eremk
\end{remark}
\noindent 
Now we discuss the stability region area of the methods associated with optimal parameters.

\smallskip
\noindent 
Since the methods are A-stable for all $J \ge 2$, the stability region contains the entire left half-plane. We are interested in the area $\mathcal{A}_J$ of the instability region located in the right half-plane. 

\smallskip
\noindent 
The instability region corresponds to the image of the unit disk under the mapping defined by the generating function $\delta_{\TTR_J}$. We report in Figure \ref{fig:1} the instability regions for $J=2,3,4,5$.

\begin{figure}[H]
    \centering
    \includegraphics[width=0.625\linewidth]{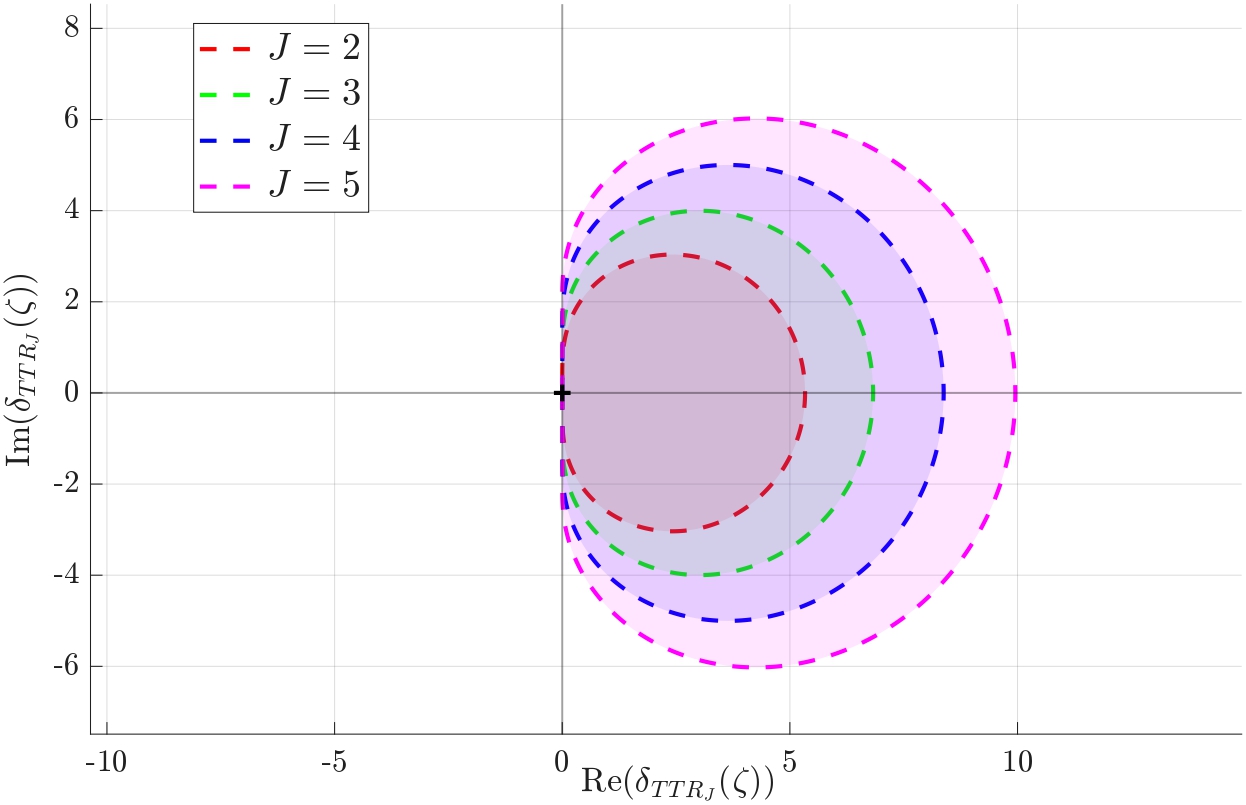}
    \caption{Instability regions of $\delta_{\TTR_J}$ for $J=2, 3, 4$, and $5$.}
    \label{fig:1}
\end{figure}

\noindent 
To compute the areas, we recall the classical \textit{Area Theorem} (see, e.g., \cite[Pag. 250]{Rudin1987}). If a polynomial $P(\zeta) = \sum_{m=0}^N p_m \zeta^m$ maps the unit disk onto a domain and it is one-to-one, the area of that mapped region is given by
\begin{equation} \label{eq:18}
    \mathcal{A} = \pi \sum_{m=1}^N m |p_m|^2.
\end{equation}
To identify the coefficients $\{p_m^J\}_{m=0}^{J+1}$ of $\delta_{\TTR_J}$ and the relation with $\{c_\ell^J\}_{\ell=2}^J$, we rewrite the generating function as a polynomial in $\zeta$. Expanding the terms $(1-\zeta)^k$ using the binomial theorem and collecting the powers of $\zeta$, we obtain $\delta_{\TTR_J}(\zeta) = \sum_{m=0}^{J+1} p_m^J \zeta^m$ with coefficients
\begin{equation*}
    p_m^J = (-1)^m \left( \binom{1}{m} + \frac{1}{2} \binom{2}{m} + \sum_{\ell=2}^J 2^{-\ell} c_\ell^J \binom{\ell+1}{m} \right), \quad \text{for } m = 0, \dots, J+1,
\end{equation*}
with the convention that $\displaystyle{\binom{n}{k} = 0}$ if $k > n$. Applying \eqref{eq:18}, we obtain 
\begin{equation*}
    \mathcal{A}_J = \pi \sum_{m=1}^{J+1} m \left( \binom{1}{m} + \frac{1}{2} \binom{2}{m} + \sum_{\ell=2}^J 2^{-\ell} c_\ell^J \binom{\ell+1}{m} \right)^2.
\end{equation*}
This formula allows for explicit evaluation of the area for any given $J \ge 2$ using the optimal coefficients computed in Theorem \ref{th:3.1}. We report in Table~\ref{tab:3} the values for $J = 2, 3, 4, 5$.

\begin{table}[H]
\centering
\begin{tabular}{|c|c|c|c|c|}
\hline
$J$ & $2$ & $3$ & $4$ & $5$ \\ \hline
Area $\mathcal{A}_J$ \rule[-20pt]{0pt}{45pt} & $\displaystyle\frac{25\pi}{3}$ & $\displaystyle\frac{\pi(34\sqrt{2} + 65)}{8}$ & $\displaystyle\frac{19\pi(2\sqrt{5} + 7)}{10}$ & $\displaystyle\frac{13\pi(8\sqrt{3} + 15)}{12}$ \\ \hline
\end{tabular}
\caption{Area of the instability region of $\delta_{\TTR_J}$ for $J=2,3,4,5$.}
\label{tab:3}
\end{table}

\noindent 
Finally, we explicitly compute the value of the $\delta_{\TTR_J}(-1)$. We recall that this point corresponds to a simple pole for the classical trapezoidal rule, whereas the truncated generating function remains finite for any finite $J$.
\begin{proposition} \label{prop:3.6}
For every integer $J \ge 2$, with optimal coefficients $\{c_\ell^J\}_{\ell=2}^J$ given by Theorem~\ref{th:3.1}, it holds true
\begin{equation*}
    \delta_{\TTR_J}(-1) = \frac{8}{(J+1)(1-\cos x_J)},
\end{equation*}
with $\displaystyle{x_J = \frac{\pi}{J+1}}$.
\end{proposition}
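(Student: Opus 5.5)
The plan is to reduce the statement to a single closed-form evaluation of $\sum_{k} U_k'(\cos x_J)$, obtained by differentiating a telescoped Chebyshev sum. First, evaluating \eqref{eq:8} at $\zeta=-1$ gives $1-\zeta=2$, so
\begin{equation*}
    \delta_{\TTR_J}(-1) = 2 + 2 + \sum_{\ell=2}^J 2^{-\ell} c_\ell^J\, 2^{\ell+1} = 4 + 2\sum_{\ell=2}^J c_\ell^J .
\end{equation*}
The claim is therefore equivalent to
\begin{equation*}
    \sum_{\ell=2}^J c_\ell^J = \frac{4}{(J+1)(1-\cos x_J)} - 2 .
\end{equation*}
As a sanity check, for $J=2$ this reads $2/3 = 4/(3\cdot \tfrac12)-2$, which is true.

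Second, I would sum the formula \eqref{eq:12} over $\ell=3,\dots,J$ and exchange the order of summation with the sum over $k$. Setting $m=\ell-3$ and $n=J-k-2$, the sign is $(-1)^{J+1-\ell}=(-1)^J(-1)^m$, and the power is $2^{\ell-1}=4\cdot 2^m$. Since $m$ ranges up to $J-3\ge n$, the inner sum is a full binomial sum:
\begin{equation*}
    4(-1)^J\sum_{m=0}^{n}\binom{n}{m}(-2)^m = 4(-1)^J(-1)^{J-k-2} = 4(-1)^k .
\end{equation*}
This sign $(-1)^k$ cancels the $(-1)^k$ already present in \eqref{eq:12}. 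Hence
\begin{equation*}
    \sum_{\ell=3}^J c_\ell^J = \frac{4(1-\cos x_J)}{J+1}\sum_{k=1}^{J-2} U_k'(\cos x_J).
\end{equation*}
Adding $c_2^J = 2\cos x_J/(1+\cos x_J)$, the proof reduces to the closed-form identity for $\sum_{k=1}^{J-2}U_k'(\cos x_J)$ that makes the target above hold. This is the analogue, without alternating signs, of Lemma~\ref{lem:A.5}.

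Third, and this is the main obstacle, I need that closed form. I would use the telescoping identity
\begin{equation*}
    \sum_{k=0}^{n} U_k(y) = \frac{1 - U_{n+1}(y) + U_n(y)}{2(1-y)},
\end{equation*}
which follows from the recurrence, or from \eqref{eq:13} summing sines. I would take $n=J-2$, differentiate in $y$, and note that $U_0'=0$, so the sum may start at $k=1$. Then I evaluate at $y=\cos x_J$ using the special values
\begin{equation*}
    U_{J}(\cos x_J)=\frac{\sin\pi}{\sin x_J}=0, \qquad U_{J-1}(\cos x_J)=\frac{\sin(Jx_J)}{\sin x_J}=1 .
\end{equation*}
The derivatives $U_J'(\cos x_J)$ and $U_{J-1}'(\cos x_J)$ come from \eqref{eq:14}: $\sin((J+1)x_J)=0$ and $\cos((J+1)x_J)=-1$, while $\sin(Jx_J)=\sin x_J$ and $\cos(Jx_J)=-\cos x_J$. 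This should produce an expression of the form $(J+1)/\sin^2 x_J$ plus lower-order trigonometric terms.

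Finally, I would substitute back, write $\sin^2 x_J=(1-\cos x_J)(1+\cos x_J)$, and simplify to obtain $4+2\sum_\ell c_\ell^J = 8/\bigl((J+1)(1-\cos x_J)\bigr)$. As a further check I would verify the result against Table~\ref{tab:2} for $J=3$ and $J=4$. The careful bookkeeping of the derivative of the quotient at the point $y=\cos x_J$ is where errors are most likely, so I would carry out that step explicitly.
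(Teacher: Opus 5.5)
Your proposal is correct and follows the paper's proof essentially step by step. You evaluate $\delta_{\TTR_J}(-1)=4+2c_2^J+2\sum_{\ell\ge 3}c_\ell^J$, interchange the sums so the inner binomial sum collapses to $4(-1)^k$, and get $\sum_{k=1}^{J-2}U_k'(\cos x_J)$ by differentiating the telescoped identity \eqref{eq:32}, which is Lemma~\ref{lem:A.4}.

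One bookkeeping fix: with $n=J-2$ the quotient involves $U_{J-1}$ and $U_{J-2}$, not $U_J$ and $U_J'$. So the values you need at $\cos x_J$ are $U_{J-1}=1$, $U_{J-2}=2\cos x_J$, $U'_{J-1}=(J+1)\cos x_J/\sin^2 x_J$ and $U'_{J-2}=(2J\cos^2 x_J-J+1)/\sin^2 x_J$. You can obtain these from \eqref{eq:14} as you propose, or from the derivative recurrence as the paper does.
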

\begin{proof}
Evaluating $\delta_{\TTR_J}$ as in \eqref{eq:8} at $\zeta = -1$ gives
\begin{equation} \label{eq:19}
    \delta_{\TTR_J}(-1) = 4 + 2 c_2^J + 2 \sum_{\ell=3}^J c_\ell^J.
\end{equation}
Inserting \eqref{eq:12} into the sum and interchanging the order of summation,
\begin{equation*}
    \sum_{\ell=3}^J c_\ell^J = \frac{1-\cos x_J}{J+1} \sum_{k=1}^{J-2} (-1)^k U'_k(\cos x_J) \sum_{\ell=3}^J (-1)^{J+1-\ell}\, 2^{\ell-1} \binom{J-k-2}{\ell-3}.
\end{equation*}
The inner sum evaluates to
\begin{equation*}
    \sum_{\ell=3}^J (-1)^{J+1-\ell}\, 2^{\ell-1} \binom{J-k-2}{\ell-3} = 4(-1)^J \sum_{j=0}^{J-k-2} \binom{J-k-2}{j} (-2)^j = 4(-1)^k,
\end{equation*}
hence we deduce
\begin{equation*}
    \sum_{\ell=3}^J c_\ell^J = \frac{4(1-\cos x_J)}{J+1} \sum_{k=1}^{J-2} U'_k(\cos x_J).
\end{equation*}
By Lemma~\ref{lem:A.4}, we have
\begin{equation*}
\begin{aligned}
    \sum_{\ell=3}^J c_\ell^J & = \frac{4(1-\cos x_J)}{J+1} \frac{2(J+1)\cos^2 x_J - (J-1)(1+\cos x_J)}{2(1-\cos x_J)^2(1+\cos x_J)} 
    \\ & = \frac{2\bigl(2(J+1)\cos^2 x_J - (J-1)(1+\cos x_J)\bigr)}{(J+1)(1-\cos x_J)(1+\cos x_J)}
\end{aligned}
\end{equation*}
and therefore with \eqref{eq:12}, we obtain
\begin{equation*}
    2 c_2^J + 2 \sum_{\ell=3}^J c_\ell^J = \frac{4\bigl((J+1)\cos x_J - (J-1)\bigr)}{(J+1)(1-\cos x_J)}.
\end{equation*}
Substituting into \eqref{eq:19}, we get
\begin{equation*}
\begin{aligned}
    \delta_{\TTR_J}(-1) & = 4 + \frac{4\bigl((J+1)\cos x_J - (J-1)\bigr)}{(J+1)(1-\cos x_J)} 
    \\ & = \frac{8}{(J+1)(1-\cos x_J)}.
\end{aligned}
\end{equation*}
\end{proof}
\begin{remark} \label{rem:3.7}
For every $J \ge 2$, the value $\delta_{\TTR_J}(-1)$ grows linearly in $J$.  Indeed, from $1 - \cos x \ge 9 x^2 / 20$ valid for 
$|x| \le \pi/3$, Proposition~\ref{prop:3.6} yields
\begin{equation*}
    \delta_{\TTR_J}(-1) \le \frac{160(J+1)}{9\pi^2},
\end{equation*}
with sharp asymptotic constant $\delta_{\TTR_J}(-1) \sim 16(J+1)/\pi^2$ as $J \to \infty$. As $J \to \infty$, we deduce $\delta_{\TTR_J}(-1) \to \infty$, consistently with the pole of $\delta_{\TR}$ at $\zeta = -1$.
\eremk
\end{remark}
\noindent
We close this section with an elementary auxiliary result. Whenever the coefficients $\{c_\ell^J\}_{\ell=2}^J$ are non-negative, which is true for optimal ones by Table~\ref{tab:2} up to $J=6$, the supremum of $|\delta_{\TTR_J}(\zeta)|$ on $\{|\zeta|=1\}$ is attained at $\zeta = -1$, and its value is given explicitly by Proposition~\ref{prop:3.6}.
\begin{lemma} \label{lem:3.8}
Let $J \ge 2$ be an integer and assume $c_\ell^J \ge 0$ for all $\ell \in \{2, \ldots, J\}$. Then, we have
\begin{equation*}
    \sup_{|\zeta| \le 1} |\delta_{\TTR_J}(\zeta)| = \delta_{\TTR_J}(-1).
\end{equation*}
\end{lemma}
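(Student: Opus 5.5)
The plan is to write $\delta_{\TTR_J}$ as a power series in the variable $w = 1-\zeta$ with non-negative coefficients, and then use the triangle inequality. From \eqref{eq:8},
\begin{equation*}
    \delta_{\TTR_J}(\zeta) = \sum_{k=1}^{J+1} a_k\, w^k, \qquad w = 1-\zeta,
\end{equation*}
where $a_1 = 1$, $a_2 = 1/2$ and $a_{\ell+1} = 2^{-\ell} c_\ell^J$ for $\ell = 2,\ldots,J$. By the assumption $c_\ell^J \ge 0$, every coefficient satisfies $a_k \ge 0$.

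First, I reduce to the boundary. The function $\delta_{\TTR_J}$ is a polynomial, hence entire. By the maximum modulus principle, the supremum over the closed disk $\{|\zeta|\le 1\}$ equals the maximum over the circle $\{|\zeta|=1\}$. Strictly, this step is not even needed, because the bound below holds directly on the whole closed disk.

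Second, for $|\zeta| \le 1$ we have $|w| = |1-\zeta| \le 1 + |\zeta| \le 2$. Applying the triangle inequality together with $a_k \ge 0$ gives
\begin{equation*}
    |\delta_{\TTR_J}(\zeta)| \le \sum_{k=1}^{J+1} a_k |w|^k \le \sum_{k=1}^{J+1} a_k 2^k = \delta_{\TTR_J}(-1).
\end{equation*}
The last equality holds because $\zeta=-1$ corresponds to $w=2$, and all terms are then real and non-negative. This matches \eqref{eq:19}: $2 + 2 + 2c_2^J + 2\sum_{\ell\ge3} c_\ell^J$.

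Third, I conclude. The point $\zeta=-1$ lies in the closed disk, so the upper bound is attained there, and the supremum is in fact a maximum equal to $\delta_{\TTR_J}(-1)$. By Proposition~\ref{prop:3.6}, for the optimal coefficients this value is explicit.

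There is no genuine obstacle here. The only point needing care is the bound $|1-\zeta|\le 2$ on the disk, with equality only at $\zeta=-1$, combined with the sign condition $a_k\ge0$. That sign condition is exactly why the hypothesis $c_\ell^J \ge 0$ is needed: without it, the triangle-inequality bound need not be attained at $w=2$.
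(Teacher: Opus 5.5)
Your proof is correct and matches the paper's argument. The paper writes the same bound as $|\delta_{\TTR_J}(\zeta)| \le \Phi_J(|1-\zeta|) \le \Phi_J(2) = \delta_{\TTR_J}(-1)$, where $\Phi_J(r) = r + \tfrac12 r^2 + \sum_{\ell=2}^J 2^{-\ell} c_\ell^J r^{\ell+1}$ is non-decreasing because its coefficients are non-negative, which is your triangle-inequality estimate in $w = 1-\zeta$; you additionally state explicitly that the bound is attained at $\zeta=-1$, which the paper leaves implicit.
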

\begin{proof}
For every $\zeta \in \C$ with $|\zeta| \le 1$, the triangle inequality yields $|1-\zeta| \le 1 + |\zeta| \le 2$, with equality if and only if $\zeta = -1$. Define $\Phi_J : [0,2] \to \R^+$ as
\begin{equation*}
    \Phi_J(r) = r + \frac{1}{2} r^2 + \sum_{\ell=2}^J 2^{-\ell} c_\ell^J\, r^{\ell+1}.
\end{equation*}
Since all coefficients of $\Phi_J$ are non-negative, $\Phi_J$ is monotone non-decreasing. From the definition of $\delta_{\TTR_J}$ in \eqref{eq:8} and the triangle inequality, we deduce
\begin{equation*}
    |\delta_{\TTR_J}(\zeta)| \le |1-\zeta| + \frac{1}{2}|1-\zeta|^2 + \sum_{\ell=2}^J 2^{-\ell} c_\ell^J\, |1-\zeta|^{\ell+1} = \Phi_J(|1-\zeta|) \le \Phi_J(2) =\delta_{\TTR_J}(-1),
\end{equation*}
where we used the monotonicity of $\Phi_J$.
\end{proof}

\section{Proof of Theorem \ref{th:3.1}} \label{sec:4}

\noindent 
Theorem \ref{th:3.1} is a consequence of the following classical result \cite{fejer1915uber} (see also \cite[Satz I]{egervary-szasz-1928}).
\begin{theorem} \label{th:4.1}
Let $n \in \N$, $\{a_k\}_{k=1}^n, \{b_k\}_{k=1}^n \subset \R$, and let
\begin{equation} \label{eq:20}
    T(\theta) = 1 + \sum_{k=1}^n (a_k \cos(k\theta) + b_k \sin(k\theta))
\end{equation}
be non-negative in $[0,2\pi]$. Then, it holds true
\begin{equation} \label{eq:21}
    \sqrt{a_1^2 + b_1^2} \le 2\cos\left(\frac{\pi}{n + 2}\right).
\end{equation}
Moreover, equality in \eqref{eq:21} is attained if and only if $T(\theta)$ is of the form
\begin{equation} \label{eq:22}
    T(\theta) = 1 + \frac{2}{n+2} \sum_{k=1}^n \Bigg( (n - k + 1)\cos \Big(k\frac{\pi}{n+2}\Big) + \frac{\sin\big((k+1)\frac{\pi}{n+2}\big)}{\sin\big(\frac{\pi}{n+2}\big)} \Bigg) \cos(k(\theta - \psi))
\end{equation}
where $\psi$ is an arbitrary constant.
\end{theorem}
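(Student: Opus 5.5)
The plan is the classical Fejér–Riesz argument, which turns the problem into a Rayleigh-quotient problem for a tridiagonal matrix. Since $T$ is a non-negative trigonometric polynomial of degree $n$, the Fejér–Riesz theorem gives a polynomial $p(z)=\sum_{j=0}^n p_j z^j$ with complex coefficients such that $T(\theta)=|p(e^{\mi\theta})|^2$. Expanding $|p(e^{\mi\theta})|^2=\sum_{j,m}p_j\bar p_m e^{\mi(j-m)\theta}$ and comparing Fourier coefficients with \eqref{eq:20} gives two identities:
\begin{equation*}
    \sum_{j=0}^n |p_j|^2 = 1, \qquad \frac{a_1-\mi b_1}{2} = \sum_{j=0}^{n-1} \bar p_j\, p_{j+1}.
\end{equation*}
The first comes from the constant term, the second from the coefficient of $e^{\mi\theta}$, up to conjugation convention.

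Put $v_j=|p_j|$ and $v=(v_0,\dots,v_n)$, a unit vector in $\R^{n+1}$. The triangle inequality gives
\begin{equation*}
    \tfrac12\sqrt{a_1^2+b_1^2}\le\sum_{j=0}^{n-1} v_jv_{j+1}=v^\top A v,
\end{equation*}
where $A$ is the $(n+1)\times(n+1)$ tridiagonal matrix with zero diagonal and $1/2$ on the off-diagonals. The eigenvalues of $A$ are $\cos(k\pi/(n+2))$ for $k=1,\dots,n+1$. The largest is $\cos(\pi/(n+2))$, with eigenvector $u_j=\sin((j+1)\pi/(n+2))$. This is the standard Chebyshev-$U$ computation, in the spirit of \eqref{eq:13}. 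Hence $v^\top Av\le\cos(\pi/(n+2))$, which is \eqref{eq:21}.

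For the equality case, both inequalities must be equalities. First, $v$ must be the normalized top eigenvector, since that eigenvalue is simple (Perron–Frobenius for an irreducible non-negative matrix). This forces $|p_j|=u_j/\|u\|$, and all these are strictly positive. Second, the terms $\bar p_jp_{j+1}$ must share a common argument. This forces $p_j=e^{\mi\alpha}e^{\mi j\psi}|p_j|$ for some constants $\alpha,\psi$, and then $T(\theta)=|\sum_j u_je^{\mi j(\theta+\psi)}|^2/\|u\|^2$. The sign of $\psi$ can be absorbed into the arbitrary constant. Conversely, any such $T$ is non-negative and attains equality, which gives the "if" direction.

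It remains to compute the autocorrelation explicitly. With $x=\pi/(n+2)$, the coefficient of $\cos(k(\theta-\psi))$ is
\begin{equation*}
    2\sum_{j=0}^{n-k}\sin((j+1)x)\sin((j+k+1)x)\Big/\sum_{j=0}^n \sin^2((j+1)x).
\end{equation*}
The denominator equals $(n+2)/2$. For the numerator, write each product as $\tfrac12[\cos(kx)-\cos((2j+k+2)x)]$. The first part contributes $\tfrac12(n-k+1)\cos(kx)$. The alternating-frequency sum $\sum_j\cos((2j+k+2)x)$ is then evaluated by the telescoping Dirichlet-kernel formula, using $(n+2)x=\pi$, and should reduce to $-\sin((k+1)x)/\sin x$. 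This produces exactly the bracket in \eqref{eq:22}.

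I expect this last trigonometric simplification to be the fiddly step, because boundary terms must be tracked carefully. The conceptual crux, by contrast, is the equality characterization: it depends on the simplicity of the top eigenvalue and on the phase-alignment argument. Both are clean once the Fejér–Riesz factorization is in place.
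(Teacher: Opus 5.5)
Your proof is correct. The paper does not prove Theorem~\ref{th:4.1}; it quotes it from Fej\'er and Egerv\'ary--Sz\'asz and uses it as a black box. Your argument is therefore an addition rather than a variant, and it is essentially the classical argument behind those references. The Fej\'er--Riesz factorization $T=|p(e^{i\theta})|^2$ reduces $\tfrac12\sqrt{a_1^2+b_1^2}$ to $|\sum_j \bar p_j p_{j+1}|\le\sum_j v_jv_{j+1}$ over unit vectors $v$. The path-matrix eigenvector $u_j=\sin((j+1)x)$, $x=\pi/(n+2)$, then gives the bound.

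The computation you flagged as fiddly does close. The denominator is $\sum_{m=1}^{n+1}\sin^2(mx)=\tfrac{n+2}{2}$, because the $(n+2)$-th roots of unity sum to zero. For the numerator, $\sum_{j=0}^{n-k}\cos((2j+k+2)x)=\frac{\sin((n-k+1)x)}{\sin x}\cos((n+2)x)=-\frac{\sin((k+1)x)}{\sin x}$. Together these give exactly the bracket in \eqref{eq:22}.

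Two remarks on the equality case:
\begin{itemize}
\item Perron--Frobenius is unnecessary. The eigenvalues $\cos(k\pi/(n+2))$, $k=1,\dots,n+1$, are distinct, and nonnegativity of $v$ selects $u/\|u\|$.
\item Your ordering is the right one. Strict positivity of every $|p_j|$ makes all terms $\bar p_jp_{j+1}$ nonzero, so equality in the triangle inequality forces $\arg p_j$ to be affine in $j$.
\end{itemize}

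What the citation buys is brevity. Your route makes the paper self-contained and gives an explicit description of the extremal polynomial. Using $u_j=\sin x\,U_j(\cos x)$ from \eqref{eq:13}, it is $T(\theta)=\frac{2\sin^2x}{n+2}\bigl|\sum_{j=0}^{n}U_j(\cos x)\,e^{ij(\theta-\psi)}\bigr|^2$. This is a natural way to see why second-kind Chebyshev polynomials appear in the optimal coefficients of Theorem~\ref{th:3.1}.
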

\noindent 
First, we connect $\Re (\delta_{\TTR_J}(e^{\mi \theta}))$ in \eqref{eq:9} to a trigonometric polynomial of the form \eqref{eq:20}. 
\begin{lemma}
For any integer $J \ge 2$, we have 
\begin{equation} \label{eq:23}
    \Re \bigl(\delta_{\TTR_J}(e^{\mi\theta})\bigr) = \sin^4 \left( \frac{\theta}{2} \right) T^J(\theta), \quad\quad \theta \in [0,2\pi]
\end{equation}
with the trigonometric polynomial
\begin{equation} \label{eq:24}
    T^J(\theta) = 4 - 2c_2^J -4c_2^J \cos \theta +  \sum_{k=2}^{J-1} A_k^J \cos(k\theta)
\end{equation}
and coefficients
\begin{equation} \label{eq:25}
    A_k^J = (-1)^k \sum_{\ell=k+1}^J c_\ell^J 2^{4-\ell} \binom{\ell-3}{k-2} \quad\quad  \text{for~} k \ge 2.
\end{equation}
\end{lemma}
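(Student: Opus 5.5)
We write $z = 1-\zeta$ with $\zeta = e^{\mi\theta}$ and compute $\Re(z^{m})$ for each power appearing in \eqref{eq:8}. The key identity is
\begin{equation*}
1-e^{\mi\theta} = -2\mi \sin(\theta/2)\, e^{\mi\theta/2},
\end{equation*}
so $z^{m} = (-2\mi)^{m}\sin^{m}(\theta/2)\, e^{\mi m\theta/2}$. For $m=1,2$ this gives $\Re z = 1-\cos\theta = 2\sin^2(\theta/2)$ and $\Re z^2 = (1-\cos\theta)^2 - \sin^2\theta$.

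The plan has three steps.

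\textbf{Step 1: the BDF2 part.} From the formulas above, the BDF2 part contributes
\begin{equation*}
\Re\Bigl(z + \tfrac12 z^2\Bigr) = (1-\cos\theta)^2 = 4\sin^4(\theta/2).
\end{equation*}
This accounts for the leading constant $4$ in \eqref{eq:24}.

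\textbf{Step 2: the truncated terms.} For $\ell \ge 2$ we use the factorization $z^{\ell+1} = z^{4}\, z^{\ell-3}$, with the convention $z^{-1}$ when $\ell=2$. Since
\begin{equation*}
z^4 = 16\sin^4(\theta/2)\, e^{2\mi\theta},
\end{equation*}
we get
\begin{equation*}
\Re\bigl(2^{-\ell} z^{\ell+1}\bigr) = \sin^4(\theta/2)\, 2^{4-\ell}\, \Re\bigl(e^{2\mi\theta} (1-e^{\mi\theta})^{\ell-3}\bigr).
\end{equation*}
For $\ell \ge 3$ we expand binomially:
\begin{equation*}
e^{2\mi\theta}(1-e^{\mi\theta})^{\ell-3} = \sum_{j=0}^{\ell-3}\binom{\ell-3}{j}(-1)^j e^{\mi(j+2)\theta}.
\end{equation*}
Its real part is $\sum_j (-1)^j\binom{\ell-3}{j}\cos((j+2)\theta)$. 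Setting $k=j+2$ gives $(-1)^k\binom{\ell-3}{k-2}\cos(k\theta)$ with $2 \le k \le \ell-1 \le J-1$. Exchanging the sums over $\ell$ and $k$ then yields exactly the coefficients $A_k^J$ in \eqref{eq:25}.

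\textbf{Step 3: the $\ell=2$ term.} This term needs separate treatment because $z^3$ is not divisible by $z^4$. We compute directly:
\begin{equation*}
\tfrac14 \Re z^3 = \tfrac14 \Re\bigl(8\mi\sin^3(\theta/2)\,e^{3\mi\theta/2}\bigr) = -2\sin^3(\theta/2)\sin(3\theta/2).
\end{equation*}
We must show this equals $\sin^4(\theta/2)(-2-4\cos\theta)$, i.e.\ that $\sin(3\theta/2) = \sin(\theta/2)(1+2\cos\theta)$. This is the standard triple-angle identity:
\begin{equation*}
\sin 3u = \sin u\,(3-4\sin^2 u) = \sin u\,(1+2\cos 2u),
\end{equation*}
applied with $u=\theta/2$. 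It produces the terms $-2c_2^J - 4c_2^J\cos\theta$ in \eqref{eq:24}.

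Collecting Steps 1–3 gives \eqref{eq:23}. The only delicate point is the sign bookkeeping in $(-2\mi)^{m}$ for the powers $m=3,4$; the rest is routine binomial reindexing.
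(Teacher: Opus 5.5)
Your proof is correct and follows essentially the same route as the paper: treat the BDF2 part, the $\ell=2$ term and the terms $\ell\ge 3$ separately, and for the latter factor out $(1-e^{\mi\theta})^4 = 16\sin^4(\theta/2)\,e^{2\mi\theta}$, expand $(1-e^{\mi\theta})^{\ell-3}$ binomially and swap the sums over $\ell$ and $k$. The differences are minor streamlinings. You pull the real factor $\sin^4(\theta/2)$ out of $\Re$ directly, which avoids the paper's pairing of $z^{\ell+1}$ with its conjugate and its use of the symmetry $\binom{\ell-3}{\ell-k-1}=\binom{\ell-3}{k-2}$; and you verify the $\ell=2$ identity explicitly via the triple-angle formula, which the paper only asserts.
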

\begin{proof}
By direct calculation, we have
\begin{equation*}
    \Re\left( (1-e^{\mi \theta}) + \frac{1}{2}(1-e^{\mi \theta})^2 \right)  = 4 \sin^4 \left(\frac{\theta}{2} \right).
\end{equation*}
Next, we evaluate the third term in $\Re (\delta_{\TTR_J}(e^{\mi \theta}))$
\begin{equation*}
    \Re\left( 2^{-2} c_2^J (1-e^{\mi \theta})^3 \right) = \sin^4 \left(\frac{\theta}{2} \right) (-2 c_2^J - 4 c_2^J \cos \theta).
\end{equation*}
Combining these initial terms, we obtain the first two coefficients of $T^J(\theta)$ in \eqref{eq:24}. It remains to show
\begin{equation*}
    \sum_{\ell=3}^J 2^{-\ell} c_\ell^J \Re \bigl((1-e^{\mi \theta})^{\ell+1}\big) = \sin^4 \left( \frac{\theta}{2}  \right) \sum_{k=2}^{J-1}  A_k^J \cos (k \theta)
\end{equation*}
with $A_k^J$ as in \eqref{eq:25}. We have  
\begin{equation*}
\begin{aligned}
    \Re \bigl((1-e^{\mi \theta})^{\ell+1}\bigr) & = \frac{(1-e^{\mi \theta})^{\ell+1} + (1-e^{-\mi \theta})^{\ell+1}}{2}
    \\ & = \frac{(1-e^{\mi \theta})^{\ell+1} \bigl(1 + (-1)^{\ell+1} e^{-\mi (\ell+1)\theta} \bigr)}{2}.
\end{aligned}
\end{equation*}
We can also write
\begin{equation*}
    \sin^4\left(\frac{\theta}{2}\right) = \left( \frac{e^{\mi \frac{\theta}{2}} - e^{-\mi \frac{\theta}{2}}}{2\mi} \right)^4 = \frac{e^{-\mi 2\theta}(1-e^{\mi \theta})^4}{16},
\end{equation*}
from which
\begin{equation*}
    \frac{\Re \bigl((1-e^{\mi \theta})^{\ell+1}\bigr)}{\sin^4 \left( \frac{\theta}{2} \right)} = 8  (1-e^{\mi \theta})^{\ell-3} \bigl( e^{\mi 2\theta} + (-1)^{\ell+1} e^{\mi (1-\ell)\theta} \bigr).
\end{equation*}
Next, we expand $(1-e^{\mi \theta})^{\ell-3}$,
distributing into two separate sums, and we compute
\begin{equation*}
\begin{aligned}
    \frac{\Re \bigl((1-e^{\mi \theta})^{\ell+1}\bigr)}{\sin^4 \left( \frac{\theta}{2} \right)} & = 8  \left( \sum_{m=0}^{\ell-3} \binom{\ell-3}{m} (-1)^m e^{\mi (m+2)\theta} + \sum_{m=0}^{\ell-3} \binom{\ell-3}{m} (-1)^{m+\ell+1} e^{\mi (m+1-\ell)\theta} \right)
    \\ & = 8  \left( \sum_{k=2}^{\ell-1} \binom{\ell-3}{k-2} (-1)^k e^{\mi k \theta} + \sum_{k=2}^{\ell-1} \binom{\ell-3}{\ell-k-1} (-1)^k e^{-\mi k \theta} \right)
    \\ & = 8 \sum_{k=2}^{\ell-1} \binom{\ell-3}{k-2} (-1)^k ( e^{\mi k \theta} + e^{-\mi k \theta} )
\end{aligned}
\end{equation*}
where we used the property $\displaystyle{\binom{\ell-3}{\ell-k-1} = \binom{\ell-3}{k-2}}$. Using $e^{\mi k\theta} + e^{-\mi k\theta} = 2\cos(k\theta)$, we finally obtain
\begin{equation*}
    \frac{\Re \bigl((1-e^{\mi \theta})^{\ell+1}\bigr)}{\sin^4 \left( \frac{\theta}{2} \right)} = 16 \sum_{k=2}^{\ell-1} \binom{\ell-3}{k-2} (-1)^k \cos (k \theta).
\end{equation*}
Then, we conclude
\begin{equation*}
\begin{aligned}
    \frac{1}{{\sin^4 \left( \frac{\theta}{2} \right)}} \sum_{\ell=3}^J 2^{-\ell} c_\ell^J \Re\bigl( (1-e^{\mi \theta})^{\ell+1} \bigr)  = \sum_{\ell=3}^J 2^{4-\ell} c_\ell^J \sum_{k=2}^{\ell-1} \binom{\ell-3}{k-2}(-1)^k \cos(k \theta) 
     = \sum_{k=2}^{J-1}  A_k^J \cos(k \theta) 
\end{aligned}
\end{equation*}
with $A_k^J$ as in \eqref{eq:25}.
\end{proof}
\noindent 
Formula \eqref{eq:25} can be inverted.
\begin{lemma}
For all integers $J \ge 2$ and  $3 \le \ell \le J$, we have
\begin{equation} \label{eq:26}
    c_\ell^J = (-1)^{\ell-1} 2^{\ell-4} \sum_{k=\ell-1}^{J-1} \binom{k-2}{\ell-3} A_k^J
\end{equation}
with $A_k^J$ as in \eqref{eq:25}
\end{lemma}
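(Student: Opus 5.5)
The plan is to view \eqref{eq:25} as an upper-triangular linear system relating $\{c_\ell^J\}_{\ell=3}^J$ to $\{A_k^J\}_{k=2}^{J-1}$, and to invert it with the binomial inversion formula. First I strip away the signs and powers of two by setting
\begin{equation*}
    d_\ell = 2^{4-\ell} c_\ell^J, \quad 3 \le \ell \le J, \qquad\qquad B_k = (-1)^k A_k^J, \quad 2 \le k \le J-1.
\end{equation*}
Then I reindex with $n = \ell - 3$ and $m = k - 2$, so that both indices run over $\{0,\ldots,J-3\}$. The condition $\ell \ge k+1$ is the same as $n \ge m$, so \eqref{eq:25} becomes
\begin{equation*}
    B_{m+2} = \sum_{n=m}^{J-3} \binom{n}{m} d_{n+3}, \qquad 0 \le m \le J-3.
\end{equation*}

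Next I invoke binomial inversion in its upper-triangular form on the finite range $\{0,\ldots,J-3\}$. It states that $b_m = \sum_{n \ge m} \binom{n}{m} a_n$ for all $m$ implies $a_n = \sum_{m \ge n} (-1)^{m-n} \binom{m}{n} b_m$. To keep the argument self-contained, I will verify this by substituting one expression into the other. The substitution produces the kernel
\begin{equation*}
    \sum_{m=n}^{p} (-1)^{m-n} \binom{m}{n}\binom{p}{m} = \binom{p}{n} \sum_{j=0}^{p-n} (-1)^j \binom{p-n}{j} = \binom{p}{n}\,\delta_{pn},
\end{equation*}
where I used $\binom{p}{m}\binom{m}{n} = \binom{p}{n}\binom{p-n}{m-n}$. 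The kernel is therefore the identity. Since the system matrix is unit upper-triangular, and hence invertible, this inverse is the unique one.

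Applying the inversion, and translating back with $m = k-2$ and $n = \ell-3$ (so $m \ge n$ becomes $k \ge \ell - 1$), I obtain
\begin{equation*}
    2^{4-\ell} c_\ell^J = \sum_{k=\ell-1}^{J-1} (-1)^{k-\ell+1} \binom{k-2}{\ell-3} (-1)^k A_k^J.
\end{equation*}
The sign simplifies as $(-1)^{2k-\ell+1} = (-1)^{\ell-1}$. Multiplying both sides by $2^{\ell-4}$ then gives exactly \eqref{eq:26}, including the prefactor $(-1)^{\ell-1}$.

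The only delicate step is the index bookkeeping: checking that the ranges $\ell \in \{3,\ldots,J\}$ and $k \in \{2,\ldots,J-1\}$ correspond bijectively, and tracking the parity of the exponents. As a sanity check I will test $J=3$. There \eqref{eq:25} gives $A_2^3 = c_3^3 \cdot 2 \cdot 1$, while \eqref{eq:26} gives $c_3^3 = (-1)^2\, 2^{-1} A_2^3$, and the two agree.
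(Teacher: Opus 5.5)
Your proposal is correct and is essentially the paper's proof: the same rescaling $x_n = 2^{1-n}c_{n+3}^J$ and $y_m = (-1)^m A_{m+2}^J$ (your $d_{n+3}$ and $B_{m+2}$), followed by upper-triangular binomial inversion on $\{0,\ldots,J-3\}$, with the same index and sign bookkeeping. The only difference is that you verify the inversion formula yourself via $\binom{p}{m}\binom{m}{n}=\binom{p}{n}\binom{p-n}{m-n}$, whereas the paper cites Graham--Knuth--Patashnik.
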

\begin{proof}
We use the binomial inversion formula \cite[Eq.~(5.48)]{GrahamKnuthPatashnik1994}, which we apply in the following form: for any finite sequences $\{x_n\}_n, \{y_m\}_m$ then it holds true
\begin{equation*}
    y_m = \sum_{n=m}^{J-3} \binom{n}{m} x_n \quad \text{if and only if} \quad x_n = \sum_{m=n}^{J-3} (-1)^{m-n} \binom{m}{n} y_m.
\end{equation*}
We apply this inversion formula to \eqref{eq:25} between the coefficients $\{A_k^J\}_{k=2}^{J-1}$ and $\{c_\ell^J\}_{\ell=3}^J$. Setting $x_n :=~c_{n+3}^J 2^{1-n}$, $y_m := (-1)^m A_{m+2}^J$, and inverting, we deduce
\begin{equation*}
    c_{n+3}^J 2^{1-n} = \sum_{m=n}^{J-3} (-1)^{m-n}\binom{m}{n} (-1)^m A_{m+2}^J = (-1)^n \sum_{m=n}^{J-3}\binom{m}{n} A_{m+2}^J.
\end{equation*}
Restoring the original indices via $\ell = n+3$ and $k = m+2$, we obtain \eqref{eq:26}.
\end{proof}
\begin{proof}[Proof of Theorem \ref{th:3.1}] From \eqref{eq:23}, we get
\begin{equation*}
    \Re(\delta_{\TTR_J}(e^{\mi \theta})) \ge 0 \quad \text{for all~} \theta \in [0,2\pi]
\end{equation*}
if and only if 
\begin{equation} \label{eq:27}
    T^J(\theta) \ge 0 \quad \text{for all} \quad  \theta \in [0,2\pi],
\end{equation}
with $T^J$ defined in \eqref{eq:24}.
We multiply \eqref{eq:27} by $1 + \cos \theta \ge 0$ and integrate in $[0,2\pi]$. Using the  relations
\begin{equation*}
    \frac{1}{2\pi}\int_{-\pi}^{\pi} \cos(k\theta) \, \dd \theta = 0 \qquad \frac{1}{2\pi}\int_{-\pi}^{\pi} \cos(k\theta) \cos \theta \, \dd \theta = \frac{1}{2}\delta_1^k \quad\quad \text{for all } k \ge 1,
\end{equation*}
we obtain
\begin{equation*}
    0 \le \frac{1}{2\pi}\int_{-\pi}^{\pi} T^J(\theta)(1+\cos\theta) \, \dd\theta = (4 - 2c_2^J) - 2c_2^J = 4 - 4c_2^J,
\end{equation*}
which yields $c_2^J \le 1$. Dividing in \eqref{eq:27} by $4-2c_2^J$, we get from Theorem \ref{th:4.1} that in order for \eqref{eq:27} to hold true, the inequality \eqref{eq:21} must be satisfied, or equivalently
\begin{equation*}
     \left|\frac{-2c_2^J}{2-c_2^J}\right| = \frac{2c_2^J}{2-c_2^J} \le 2\cos\left(\frac{\pi}{J+1}\right).
\end{equation*}
The function $f(t) = t/(2-t)$ is strictly increasing in $(0,1)$, therefore the maximum $c_2^J$ is reached when there is an equality to the upper bound. This gives
\begin{equation} \label{eq:28}
    c_2^J = 2 \frac{\cos x_J}{1 + \cos x_J}
\end{equation}
with $\displaystyle{x_J = \frac{\pi}{J+1}}$. The polynomial that attains equality is of the form \eqref{eq:22}. Normalizing the polynomial $T^J(\theta)$ by its constant term $4 - 2c_2^J$, the coefficient of the $\cos \theta$ term is then 
\begin{equation*}
    -\frac{2c_2^J}{2-c_2^J}.
\end{equation*}
This coefficient is strictly negative. On the other hand, the corresponding $k=1$ term in the optimal polynomial of Theorem \ref{th:4.1} is
\begin{equation*}
    2\cos\left(\frac{\pi}{J+1}\right) \cos(\theta - \psi).
\end{equation*}
To match the negative coefficient, the shifted cosine must absorb the negative sign, requiring $\cos(\theta - \psi) = -\cos \theta$. This gives $\psi = \pi$.  Consequently, this shift yields 
\begin{equation*}
    \cos(k(\theta - \pi)) = (-1)^k\cos(k\theta).
\end{equation*}
By the equality condition \eqref{eq:22} of Theorem \ref{th:4.1} applied to $n = J-1$, the coefficients must therefore satisfy
\begin{equation} \label{eq:29} 
    \frac{A_k^J}{4 - 2c_2^J} = \frac{2}{J+1} (-1)^k \left( (J - k)\cos(k x_J) + \frac{\sin((k+1)x_J)}{\sin x_J} \right), \quad \text{for } k=2,\ldots,J-1.
\end{equation}
Now we show that
\begin{equation} \label{eq:30}
    (J - k)\cos(k x_J) + \frac{\sin((k+1)x_J)}{\sin x_J} = \sin^2 x_J U'_{J-k}(\cos x_J).
\end{equation}
Indeed, using \eqref{eq:14}, we compute
\begin{equation*}
\begin{aligned}
    \sin^2 x_J U'_{J-k}(\cos x_J) & = \frac{\sin((J-k+1)x_J)\cos x_J - (J-k+1)\cos((J-k+1)x_J)\sin x_J}{\sin x_J}
    \\ & = \frac{\sin(\pi -k x_J)\cos x_J - (J-k+1)\cos(\pi - kx_J)\sin x_J}{\sin x_J}.
    \\ & = \frac{\sin(k x_J)\cos x_J + (J-k+1)\cos(k x_J)\sin x_J}{\sin x_J}
    \\ & = (J-k) \cos (k x_J) + \frac{\sin ((k+1)x_J)}{\sin x_J}.
\end{aligned}
\end{equation*}
Combining \eqref{eq:29} and \eqref{eq:30}, we deduce
\begin{equation*}
    A_k^J = (-1)^k \frac{2(4 - 2c_2^J)}{J+1} \sin^2 x_J U'_{J-k}(\cos x_J).
\end{equation*}
We obtain, substituting $c_2^J$ in \eqref{eq:28}, the expression
\begin{equation*}
    A_k^J = (-1)^k \frac{8(1-\cos x_J)}{J+1} U'_{J-k}(\cos x_J).
\end{equation*}
Finally, we plug this relation into \eqref{eq:26} and we get \eqref{eq:12}.
\end{proof}

\section{Numerical examples} \label{sec:5}

\noindent 
The numerical experiments were performed in MATLAB. The implementation has been done with the convolution quadrature library\footnote{\url{https://github.com/lehelb/TDBIE-CQ-book}} accompanying \cite{BanjaiSayas2022}.

\subsection{Example 1. Instabilities of the trapezoidal rule}

\noindent 
We consider a first test problem to illustrate the impact of a numerical perturbation on the stability of CQ methods.

\smallskip
\noindent
Let $\phi(t) = e^{-100\,(t - t_0)^2}$ be a Gaussian pulse centered at $t_0 = 0.5$ and consider the convolution equation $\K(\partial_t) g = \phi$ with kernel $\K(s) = s^{-1}$, whose exact solution is $g(t) = \dot \phi(t)$. To model a computational error $\varepsilon > 0$, we introduce the perturbed symbol $\K_\varepsilon(s) = \K(s) - \varepsilon$. The CQ scheme requires evaluating the inverse operator $\K_\varepsilon^{-1}(s) = \frac{s}{1 - \varepsilon s}$. This perturbed inverse has a simple real pole in the right half-plane at $s=1/\varepsilon$. Moreover, the kernels $\K$ and $\K_\varepsilon$ satisfy \eqref{eq:7} with $\mu'=0$ and $\K$ satisfies \eqref{eq:8bis} with $\zeta(s) = s/|s|$ and $\mu_2=1$.

\smallskip
\noindent 
To compute the convolution weights with formula \eqref{eq:2}, we need to evaluate the inverse symbol at the discrete Laplace variable $s^{\Delta t}(\zeta) = \delta(\zeta)/\Delta t$, where $\delta$ is the generating function of the underlying multistep method, and $\zeta$ ranges over a circle $|\zeta| = \lambda$ with $\lambda < 1$. A standard choice, used to balance round-off and truncation error in the inverse FFT, is $\lambda = \mathrm{eps}^{\,1/(2N+1)}$ (see, e.g., \cite[\S 3.4]{BanjaiSayas2022}) where $ N = T/\Delta t$ and $\mathrm{eps}$ is the machine precision. We compare the location of the set  $\{s^{\Delta t}(\zeta) : |\zeta| = \lambda\}$ with the pole $1/\varepsilon$ for each method.

\smallskip
\noindent 
\textbf{BDF2 and TTR$_\text{$J$}$.} Both methods are A-stable, and their generating functions $\delta$ satisfy~\eqref{eq:4}. In particular, we obtain
\begin{equation*}
    \sup_{|\zeta| \le 1} |\delta_{\mathrm{BDF2}}(\zeta)| = \delta_{\mathrm{BDF2}}(-1) = 4, \qquad \sup_{|\zeta| \le 1} |\delta_{\TTR_J}(\zeta)| = \delta_{\TTR_J}(-1) \le \frac{160(J+1)}{9\pi^2}
\end{equation*}
where we used Lemma \ref{lem:3.8} and Proposition \ref{prop:3.6} (see also Remark \ref{rem:3.7}).\footnote{We remark that Lemma \ref{lem:3.8} implies $\sup_{|\zeta|\le 1} |\delta_{\TTR_J}(\zeta)| = \delta_{\TTR_J}(-1)$ for optimal coefficients $\{c_\ell^J\}_{\ell=2}^J$ only for $J=2,\ldots,6$ (see Table \ref{tab:2}), but we conjecture it is true for all $J \ge 2$.} Consequently, we have
\begin{equation*}
    \sup_{|\zeta| = \lambda} \bigl|s_{\mathrm{BDF2}}^{\Delta t}(\zeta)\bigr| \le \frac{4}{\Delta t}, \quad\quad \sup_{|\zeta| = \lambda} \bigl|s_{\TTR_J}^{\Delta t}(\zeta)\bigr| \le \frac{160(J+1)}{9\pi^2\Delta t}.
\end{equation*}
\textbf{Trapezoidal rule.} A direct computation shows that
\begin{equation*}
    \sup_{|\zeta|=\lambda}|s_{\TR}^{\Delta t}(\zeta)| 
    = s_{\TR}^{\Delta t}(-\lambda) 
    = \frac{2(1+\lambda)}{\Delta t\,(1-\lambda)}.
\end{equation*}
Using $1 - \lambda \le |\log\mathrm{eps}|/(2N) = \Delta t|\log\mathrm{eps}|/(2T)$, we obtain
\begin{equation*}
    \sup_{|\zeta|=\lambda}|s_{\TR}^{\Delta t}(\zeta)| 
    \ge \frac{4T}{\Delta t^{2}|\log\mathrm{eps}|},
\end{equation*}
which grows as $\Delta t^{-2}$, in contrast with the $\Delta t^{-1}$ bounds obtained for BDF2 and TTR$_\text{J}$.

\smallskip
\noindent 
For $T=10$, $N = 10^{3}$ and machine precision $\mathrm{eps} \approx 2.22 \cdot 10^{-16}$, the three methods give
\begin{equation*}
    \sup_{|\zeta|=\lambda}|s_{\mathrm{BDF2}}^{\Delta t}(\zeta)| \le 4\cdot 10^2, \quad  \sup_{|\zeta|=\lambda}|s_{\TTR_J}^{\Delta t}(\zeta)| \le 2(J+1) \cdot 10^2, \quad \sup_{|\zeta|=\lambda}|s_{\TR}^{\Delta t}(\zeta)| \ge 1.11\cdot 10^{4}.
\end{equation*}
The location of these thresholds is shown in Figure \ref{fig:thresholds}.

\begin{figure}[H]
\centering
\begin{tikzpicture}[>=stealth, font=\small]
\definecolor{colBDF2}{RGB}{0,115,189}
\definecolor{colTR}{RGB}{217,84,26}
\definecolor{colTTRa}{RGB}{237,176,33}
\definecolor{colTTRb}{RGB}{204,140,76}
\definecolor{colTTRc}{RGB}{153,115,140}
\definecolor{colTTRd}{RGB}{125,46,143}
\definecolor{colTTRe}{RGB}{77,51,166}
\draw[->, thick] (-0.3,0) -- (13.5,0) node[right] {$1/\varepsilon$};
\def\xB{1.0}
\def\xT{2.2}
\def\xTb{3.4}
\def\xTc{4.6}
\def\xTd{5.8}
\def\xTe{7.0}
\def\xTR{11.5}
\def\xDt{0.3}
\def\xDtSq{10.5}
\draw[densely dashed, gray!60, thick] (\xDt,-0.75) -- (\xDt,0.7);
\draw[densely dashed, gray!60, thick] (\xDtSq,-0.75) -- (\xDtSq,0.7);
\node[gray!70, below=1pt] at (\xDt,-0.75)   {$\Delta t^{-1}$};
\node[gray!70, below=1pt] at (\xDtSq,-0.75) {$\Delta t^{-2}$};
\node[circle, fill=colBDF2,  draw=black, inner sep=2.5pt] at (\xB,0)  {};
\node[rectangle, fill=colTTRa, draw=black, inner sep=2.5pt] at (\xT,0)  {};
\node[rectangle, fill=colTTRb, draw=black, inner sep=2.5pt] at (\xTb,0) {};
\node[rectangle, fill=colTTRc, draw=black, inner sep=2.5pt] at (\xTc,0) {};
\node[rectangle, fill=colTTRd, draw=black, inner sep=2.5pt] at (\xTd,0) {};
\node[rectangle, fill=colTTRe, draw=black, inner sep=2.5pt] at (\xTe,0) {};
\node[diamond,   fill=colTR,   draw=black, inner sep=2pt]   at (\xTR,0) {};
\node[above=2pt, colBDF2] at (\xB,0)  {BDF2};
\node[above=2pt, colTTRa] at (\xT,0)  {TTR$_2$};
\node[above=2pt, colTTRb] at (\xTb,0) {TTR$_3$};
\node[above=2pt, colTTRc] at (\xTc,0) {TTR$_4$};
\node[above=2pt, colTTRd] at (\xTd,0) {TTR$_5$};
\node[above=2pt, colTTRe] at (\xTe,0) {TTR$_6$};
\node[above=2pt, colTR]   at (\xTR,0) {TR};
\node[below=4pt] at (\xB,0)  {\tiny$4.0 \cdot 10^2$};
\node[below=4pt] at (\xT,0)  {\tiny $5.4 \cdot 10^2$};
\node[below=4pt] at (\xTb,0) {\tiny $7.2\cdot 10^2$};
\node[below=4pt] at (\xTc,0) {\tiny $9.0 \cdot 10^2$};
\node[below=4pt] at (\xTd,0) {\tiny $1.1 \cdot 10^3$};
\node[below=4pt] at (\xTe,0) {\tiny $1.3 \cdot 10^3$};
\node[below=4pt] at (\xTR,0) {\tiny $1.1 \cdot 10^4$};
\end{tikzpicture}
\caption{Thresholds for $\sup_{|\zeta|=\lambda}|s^{\Delta t}(\zeta)|$ for $T = 10$, $N = 10^3$, $\mathrm{eps} \approx 2.22 \cdot 10^{-16}$.}
\label{fig:thresholds}
\end{figure}

\noindent 
The BDF2 and TTR$_J$ thresholds are of order $\Delta t^{-1}$, with TTR$_J$ growing linearly in $J$. The trapezoidal rule threshold is of order $\Delta t^{-2}$. A method is unstable when $1/\varepsilon$ lies to the left of its threshold, since the discrete Laplace variable then samples $\K_\varepsilon^{-1}$ arbitrarily close to its pole.

\smallskip
\noindent
We report in Figure~\ref{fig:perturb} the pointwise error
\begin{equation*}
    \bigl|\K_\varepsilon^{-1}(\partial_t^{\Delta t})\phi\,(t_n) - \K^{-1}(\partial_t)\phi\,(t_n)\bigr|, \qquad n = 0,\ldots,N,
\end{equation*}
for BDF2, TR, and TTR$_4$ (with optimal coefficients given in Table \ref{tab:2})  with $T=10$ and $N=10^3$. 

\begin{figure}[H]
\centering
\begin{subfigure}{0.48\linewidth}
    \centering
    \includegraphics[width=\linewidth]{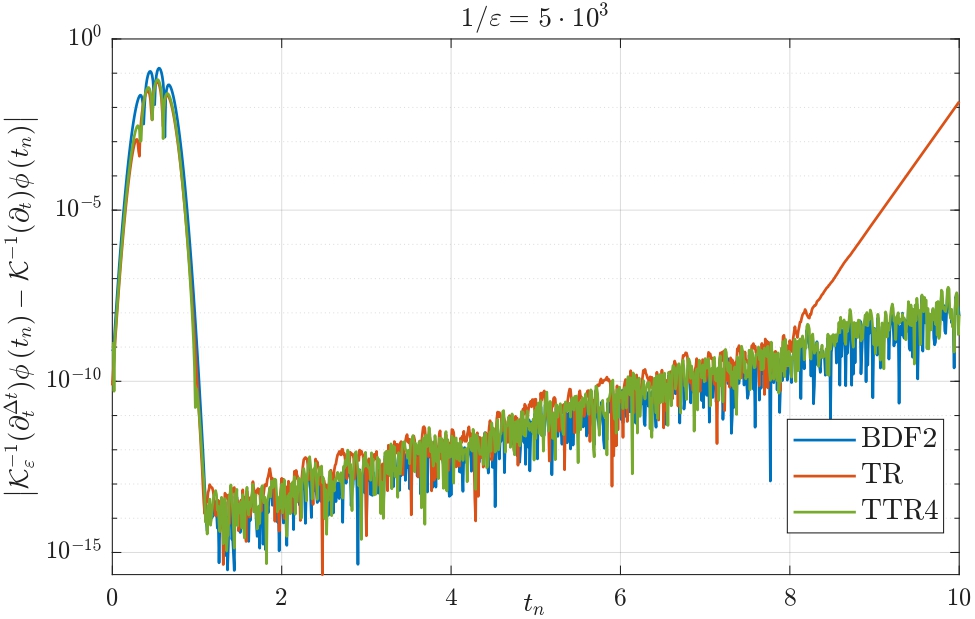}
\end{subfigure}
\hfill
\begin{subfigure}{0.48\linewidth}
    \centering
    \includegraphics[width=\linewidth]{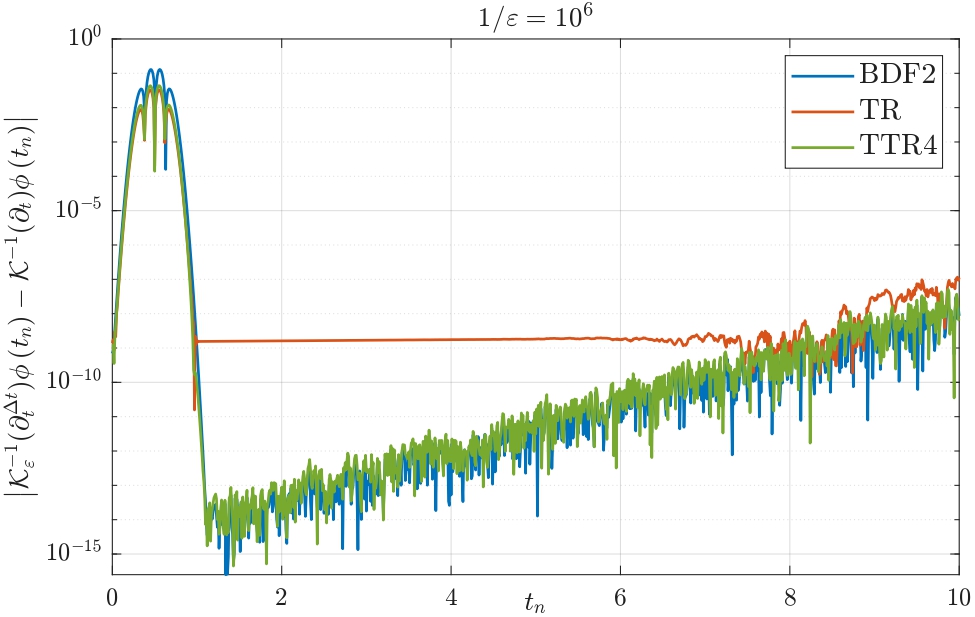}
\end{subfigure}
\caption{Example 1. Pointwise error
$\bigl|\K_\varepsilon^{-1}(\partial_t^{\Delta t})\phi\,(t_n) - \K^{-1}(\partial_t)\phi\,(t_n)\bigr|$ for BDF2, TR, and TTR$_4$, with $T=10$ and $N=10^3$. On the left, $1/\varepsilon = 5 \cdot 10^3$ and on the right, $1/\varepsilon = 10^6$.}
\label{fig:perturb}
\end{figure}

\noindent
In Figure~\ref{fig:perturb} (left), $1/\varepsilon = 5 \cdot 10^3$ lies between the BDF2 and TTR$_4$ thresholds and the TR threshold. TR is unstable, while BDF2 and TTR$_4$ remain accurate. In Figure~\ref{fig:perturb} (right), $1/\varepsilon = 10^6$ lies beyond all thresholds: all the methods perform accurately.

\subsection{Example 2. Convergence rates}

\noindent
We test the predicted convergence rate and the explicit dependence on $J$ of the principal error constant \eqref{eq:15}. We consider the hyperbolic symbol $\K(s) = s^{1.5}$, and the Gaussian datum $g(t) = e^{-100(t-0.5)^2}$ on $[0,T]$ with $T=1$. The datum $g$ is smooth and zero, up to machine precision, at $t = 0$ and $t = T$, so \eqref{eq:5}--\eqref{eq:6} apply for every $r \in \N$ and CQ based on BDF2, TR, and TTR$_J$ all attain the second-order rate. We compute the discrete convolution $\K(\partial_t^{\Delta t}) g (T)$ for $N = 2^k$ with $k=5,6,7,8,9$ time steps using BDF2, the trapezoidal rule, and the truncated trapezoidal rule of order $J = 2, 3, 4$, with the optimal coefficients given by Theorem~\ref{th:3.1} (see Table~\ref{tab:2}). We use as a reference value a CQ approximation computed with TTR$_8$ at $N_{\mathrm{ref}} = 2^{14}$. Figure~\ref{fig:2} reports the absolute error $\bigl|\K(\partial_t) g (T) - \K(\partial_t^{\Delta t}) g (T)\bigr|$
as a function of $\Delta t$ for the five methods. All curves exhibit the expected $\mathcal{O}(\Delta t^2)$ behavior, and the relative ordering matches the principal error constants in \eqref{eq:15}: TR attains the smallest error, followed by TTR$_4$, TTR$_3$, TTR$_2$, and finally BDF2, in agreement with the theory.

\begin{figure}[H]
    \centering
    \includegraphics[width=0.7\linewidth]{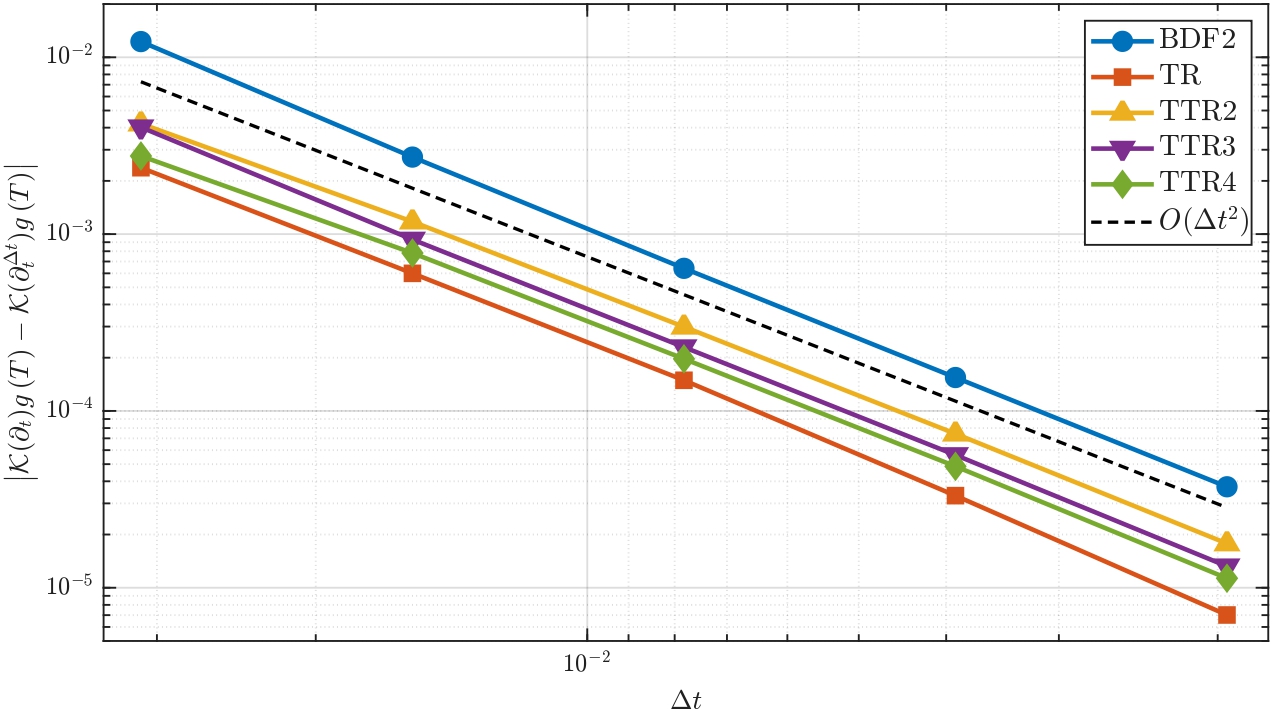}
    \caption{Example 2. Convergence of the discrete convolution
    $\K(\partial_t^{\Delta t}) g (T)$ to $\K(\partial_t) g (T)$ for
    $\K(s) = s^{1.5}$ and $g(t) = e^{-100(t-0.5)^2}$, with $T=1$.
    All methods exhibit the expected $\mathcal{O}(\Delta t^2)$ rate;
    the relative ordering of the curves reflects the principal error
    constants \eqref{eq:15}.}
    \label{fig:2}
\end{figure}
 
\section{Conclusion}

\noindent
We have studied the truncated trapezoidal rule, the family of A-stable linear multistep methods introduced in \cite{Banjai2022} as a compromise between BDF2 and the trapezoidal rule. For every integer $J \ge 2$, we derived an explicit closed-form expression for the coefficients $\{c_\ell^J\}_{\ell=2}^J$ that minimize the principal error constant under the A-stability constraint (Theorem~\ref{th:3.1}). As a direct consequence, the optimal principal error constant admits the closed form \eqref{eq:15} which is monotonically decreasing in $J$ and converges to the optimal Dahlquist value $1/12$ as $J \to \infty$. For every finite $J \ge 2$, the constant is strictly smaller than the BDF2 value $1/3$. We have further shown that the leading dissipation coefficient vanishes when $J$ is even. Within CQ, this makes the TTR$_J$ an alternative to both BDF2 and the trapezoidal rule. The analyticity of $\delta_{\TTR_J}$ in a neighbourhood of $\{|\zeta| \le 1\}$ ensures the milder regularity and perturbation requirements summarized in Table~\ref{tab:1}, while the explicit control on the error constant allows one to approach the optimal Dahlquist value $1/12$ as closely as desired by increasing $J$. The numerical experiments of Section~\ref{sec:5} confirm this. TTR$_J$-based CQ is stable under symbol perturbations, while the trapezoidal rule is not: its pole at $\zeta = -1$ amplifies the perturbations. The error constant of TTR$_J$ is also smaller than that of BDF2. Future work includes the development of fast algorithms for this method, along the lines of fast and oblivious CQ \cite{SchadleLopezFernandezLubich2006} or parsimonious CQ \cite{MelenkNick2024}. A further open problem is the analysis of a generalised CQ version of the TTR$_J$ on non-uniform meshes \cite{LopezFernandezSauter2013,LopezFernandezSauter2015a,BanjaiFerrari2026}.

\section*{Acknowledgments}

\noindent
Most of this work has been carried out while the author was affiliated with the Faculty of Mathematics at the University of Vienna. The author is member of the Gruppo Nazionale Calcolo Scientifico-Istituto Nazionale di Alta Matematica (GNCS-INdAM). The author thanks Lehel Banjai for useful discussions.

\appendix
\section{Auxiliary results}

\noindent 
In this appendix, we collect some identities involving the Chebyshev polynomials of the second kind $\{U_k\}_{k \ge 0}$ that are used in the proofs of Section~\ref{sec:3}. In particular, Lemmas~\ref{lem:A.4} and~\ref{lem:A.5} are used, respectively, in the proof of Proposition~\ref{prop:3.6} and in the derivation of formula~\eqref{eq:16}.

\smallskip
\noindent
We recall the recurrence relation (see, e.g., \cite[Eq. 1.6a]{Mason2002})
\begin{equation} \label{eq:31}
    U_{k+1}(y) = 2y U_k(y) - U_{k-1}(y)
\end{equation}
which together with the initial conditions $U_0(x) = 1$ and $U_1(x) = 2x$ provide an alternative definition of the polynomials $\{U_k\}_{k \ge 0}$.
\begin{lemma}
For every integer $J \ge 2$, it holds true
\begin{equation} \label{eq:32}
    \sum_{k=0}^{J-2} U_k(y) = \frac{1 - U_{J-1}(y) + U_{J-2}(y)}{2(1-y)}, \quad y \in [-1,1).
\end{equation}
\end{lemma}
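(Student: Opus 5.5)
The plan is a telescoping argument based on the three-term recurrence \eqref{eq:31}. Write $S_{J}(y) = \sum_{k=0}^{J-2} U_k(y)$; the aim is to show $2(1-y)S_J(y) = 1 - U_{J-1}(y) + U_{J-2}(y)$ as a polynomial identity in $y$. Dividing by $2(1-y)$, which is nonzero for $y\in[-1,1)$, then gives \eqref{eq:32}.

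First, rewrite \eqref{eq:31} as $2yU_k = U_{k+1} + U_{k-1}$ for $k\ge 1$. For $k=0$ use the convention $U_{-1}\equiv 0$, so that $2yU_0 = U_1 + U_{-1}$ still holds because $U_1 = 2y$. This gives
\begin{equation*}
2(1-y)U_k(y) = (U_k - U_{k+1}) + (U_k - U_{k-1}), \qquad k \ge 0.
\end{equation*}
Next, sum over $k=0,\ldots,J-2$. Each bracket telescopes: the first gives $U_0 - U_{J-1} = 1 - U_{J-1}$, and the second gives $U_{J-2} - U_{-1} = U_{J-2}$. Adding the two yields $2(1-y)S_J = 1 - U_{J-1} + U_{J-2}$, which is the claim.

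There is no real obstacle. The only points needing care are the boundary term at $k=0$, handled by the convention $U_{-1}=0$, and the exclusion of $y=1$, where the right-hand side of \eqref{eq:32} becomes a removable $0/0$. As an alternative check one can use \eqref{eq:13} with $y=\cos x$: the sum $\sum \sin((k+1)x)$ has a closed form, and combining it with the sum-to-product identities reproduces the same formula. For $J=2$ the identity reads $1 = (1-2y+1)/(2(1-y))$, which is a quick sanity check.
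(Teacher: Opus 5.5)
Your proposal is correct and is essentially the paper's proof: rewrite the three-term recurrence as $2(1-y)U_k = (U_k - U_{k+1}) + (U_k - U_{k-1})$, sum over $k=0,\ldots,J-2$ so that both brackets telescope, and use $U_0\equiv 1$, $U_{-1}\equiv 0$. You also check explicitly that the $k=0$ case of the recurrence holds under the convention $U_{-1}=0$, a point the paper leaves implicit.
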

\begin{proof}
From the three-term recurrence \eqref{eq:31}, we deduce
\begin{equation*}
    2(1-y) U_k(y) = 2 U_k(y) - \bigl( U_{k+1}(y) + U_{k-1}(y) \bigr) = \bigl( U_k(y) - U_{k+1}(y) \bigr) - \bigl( U_{k-1}(y) - U_k(y) \bigr).
\end{equation*}
Summing for $k=0,1,\ldots,J-2$, the right-hand side telescopes
\begin{equation*}
    2(1-y) \sum_{k=0}^{J-2} U_k(y) = \bigl( U_0(y) - U_{J-1}(y) \bigr) - \bigl( U_{-1}(y) - U_{J-2}(y) \bigr).
\end{equation*}
With $U_0 \equiv 1$ and $U_{-1} \equiv 0$, dividing by $2(1-y)$ yields \eqref{eq:32}.
\end{proof}
\begin{lemma}
For all $J \ge 2$ integer, it holds true
\begin{equation} \label{eq:33}
    \sum_{k=0}^{J-2} (-1)^k U_k(y) = \frac{1 + (-1)^J \bigl( U_{J-1}(y) + U_{J-2}(y) \bigr)}{2(1+y)}, \quad\quad y \in (-1,1].
\end{equation}
\end{lemma}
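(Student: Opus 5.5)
The plan is to mirror the proof of \eqref{eq:32}, but with alternating signs. Starting from the three-term recurrence \eqref{eq:31}, we write
\begin{equation*}
    2(1+y)\,U_k(y) = 2U_k(y) + U_{k+1}(y) + U_{k-1}(y) = \bigl(U_k(y) + U_{k+1}(y)\bigr) + \bigl(U_{k-1}(y) + U_k(y)\bigr).
\end{equation*}
Setting $V_k := U_k + U_{k+1}$, this reads $2(1+y)U_k = V_k + V_{k-1}$. Multiplying by $(-1)^k$ gives
\begin{equation*}
    2(1+y)(-1)^k U_k = (-1)^k V_k - (-1)^{k-1} V_{k-1},
\end{equation*}
which is a telescoping difference in the sequence $k \mapsto (-1)^k V_k$.

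Summing for $k = 0, \ldots, J-2$, the right-hand side collapses to $(-1)^{J-2}V_{J-2} - (-1)^{-1}V_{-1}$. With the convention $U_{-1} \equiv 0$ (used already in the proof of \eqref{eq:32}), we have $V_{-1} = U_{-1} + U_0 = 1$. Also $(-1)^{J-2} = (-1)^J$ and $V_{J-2} = U_{J-2} + U_{J-1}$. The sum therefore equals $1 + (-1)^J\bigl(U_{J-1}(y) + U_{J-2}(y)\bigr)$. Dividing by $2(1+y)$, which is nonzero for $y \in (-1,1]$, yields \eqref{eq:33}.

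The only step needing care is the boundary bookkeeping. First, the recurrence \eqref{eq:31} must hold at $k=0$ with $U_{-1}=0$; this is true because $U_1 = 2y = 2yU_0 - U_{-1}$. Second, the signs of the endpoint terms must come out right. As a sanity check I would verify the case $J=2$: the left side is $U_0 = 1$, and the right side is $(1 + 2y + 1)/(2(1+y)) = 1$.
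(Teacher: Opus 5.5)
Your proof is correct and follows essentially the same route as the paper: both multiply the three-term recurrence \eqref{eq:31} by $(-1)^k$, telescope over $k=0,\ldots,J-2$, and use $U_0\equiv 1$, $U_{-1}\equiv 0$. The only difference is cosmetic: you group terms into the single telescoping sequence $(-1)^k(U_k+U_{k+1})$, while the paper writes the same expression as two telescoping differences of $(-1)^kU_k$; your explicit check that the recurrence holds at $k=0$ makes a step the paper leaves implicit explicit.
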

\begin{proof}
From the three-term recurrence \eqref{eq:31} we deduce
\begin{equation*}
    2(1+y)(-1)^k U_k(y) = (-1)^k \bigl( U_{k+1}(y) + U_{k-1}(y) \bigr) + 2(-1)^k U_k(y),
\end{equation*}
so that, rearranging,
\begin{equation*}
    2(1+y)(-1)^k U_k(y) = \bigl( (-1)^k U_k(y) - (-1)^{k+1} U_{k+1}(y) \bigr) + \bigl((-1)^k U_k(y) - (-1)^{k-1} U_{k-1}(y) \bigr).
\end{equation*}
Summing for $k=0,1,\ldots,J-2$, the right-hand side telescopes
\begin{equation*}
    2(1+y) \sum_{k=0}^{J-2} (-1)^k U_k(y) = \bigl( U_0(y) - (-1)^{J-1} U_{J-1}(y) \bigr) + \bigl( (-1)^{J-2} U_{J-2}(y) - (-1)^{-1} U_{-1}(y) \bigr).
\end{equation*}
With $U_0 \equiv 1$, $U_{-1} \equiv 0$, dividing by $2(1+y)$ yields \eqref{eq:33}.
\end{proof}
\begin{lemma}
Let $J \ge 2$ be an integer and $x_J = \displaystyle{\frac{\pi}{J+1}}$. Then, the following identities are true
\begin{align} \label{eq:34}
    U_J(\cos x_J) & = 0,
    \\ U_{J-1}(\cos x_J) &  = 1, \label{eq:35}
    \\ U_{J-2}(\cos x_J) & = 2 \cos x_J,  \label{eq:36}
    \\ U_{J-3}(\cos x_J) & = 4\cos^2 x_J - 1, \label{eq:37}
    \\ U'_{J-1}(\cos x_J) & = \frac{(J+1)\cos x_J}{1-\cos^2 x_J}, \label{eq:38}
    \\ U'_{J-2}(\cos x_J) & = \frac{2J\cos^2 x_J - J + 1}{1-\cos^2 x_J}, \label{eq:39}
\end{align}
with the convention $U_{-1}(y) = 0$.
\end{lemma}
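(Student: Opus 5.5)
The plan is to derive all six identities from the trigonometric representations \eqref{eq:13} and \eqref{eq:14}, evaluated at $x = x_J = \pi/(J+1)$. The key relation is $(J+1)x_J = \pi$, so that $(J+1-m)x_J = \pi - m x_J$. Since $0 < x_J \le \pi/3$, we have $\sin x_J \neq 0$, and all quotients are well defined. Throughout I write $c = \cos x_J$ and $s = \sin x_J$.

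For the values of $U$, \eqref{eq:13} gives $U_{J-m}(c) = \sin((J+1-m)x_J)/s = \sin(m x_J)/s$. This yields the following:
\begin{itemize}
\item $m=0$ gives $\sin 0 = 0$, which is \eqref{eq:34};
\item $m=1$ gives $1$, which is \eqref{eq:35};
\item $m=2$ gives $\sin(2x_J)/s = 2c$, which is \eqref{eq:36};
\item $m=3$ gives $\sin(3x_J)/s = 3 - 4s^2 = 4c^2 - 1$, which is \eqref{eq:37}.
\end{itemize}
For $J=2$, identity \eqref{eq:37} concerns $U_{-1}$, so it needs a separate check. In that case $x_2 = \pi/3$, hence $4c^2 - 1 = 0 = U_{-1}$, which matches the stated convention. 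An alternative route is to take \eqref{eq:34}--\eqref{eq:35} from the sine formula and obtain the rest from the recurrence \eqref{eq:31} run backwards: $U_{J-2} = 2cU_{J-1} - U_J$ and $U_{J-3} = 2cU_{J-2} - U_{J-1}$. This also covers $J=2$ consistently.

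For the derivatives, I use \eqref{eq:14} with $k = J-m$, so that $k+1 = J+1-m$. Substituting $\sin((J+1-m)x_J) = \sin(m x_J)$ and $\cos((J+1-m)x_J) = -\cos(m x_J)$ gives
\begin{equation*}
U'_{J-m}(c) = \frac{\sin(m x_J)\, c + (J+1-m)\cos(m x_J)\, s}{s^3}.
\end{equation*}
\begin{itemize}
\item For $m=1$ the numerator is $sc + Jcs = (J+1)cs$. Dividing by $s^3$ gives $(J+1)c/s^2 = (J+1)c/(1-c^2)$, which is \eqref{eq:38}.
\item For $m=2$ the numerator is $2sc^2 + (J-1)(2c^2-1)s$. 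Dividing by $s^3$ gives $\bigl(2c^2 + (J-1)(2c^2-1)\bigr)/s^2 = (2Jc^2 - J + 1)/(1-c^2)$, which is \eqref{eq:39}.
\end{itemize}

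The only point needing care is keeping track of signs when reducing $\cos(\pi - m x_J)$. This is the same manipulation already carried out in the proof of \eqref{eq:30}, so no step should be a genuine obstacle. I would also double-check the edge case $J=2$ in \eqref{eq:39}. There $U'_0 = 0$, and with $c = 1/2$ the numerator of \eqref{eq:39} is $4\cdot\tfrac14 - 2 + 1 = 0$, so the identity holds.
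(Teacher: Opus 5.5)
Your proof is correct. For the values \eqref{eq:34}--\eqref{eq:37} you argue exactly as the paper does, using the sine representation \eqref{eq:13} together with $(J+1)x_J=\pi$.

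For the derivatives you take a different route, and it is equally short. The paper uses the derivative recurrence $U_k'(y) = \bigl((k+2)U_{k-1}(y) - kU_{k+1}(y)\bigr)/\bigl(2(1-y^2)\bigr)$ from Mason--Handscomb and plugs in the values already computed. In that route, \eqref{eq:38} comes from \eqref{eq:34} and \eqref{eq:36}, and \eqref{eq:39} comes from \eqref{eq:35} and \eqref{eq:37}; the latter is where the convention $U_{-1}=0$ actually enters, at $J=2$.

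You instead differentiate the trigonometric representation: you apply \eqref{eq:14} with $(J+1-m)x_J=\pi-mx_J$. This is self-contained within the paper, needs no external identity, and is the same manipulation as in the proof of \eqref{eq:30}. Your sign handling, $\cos(\pi-mx_J)=-\cos(mx_J)$, and your simplifications for $m=1,2$ are right.

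The trade-off between the two routes is small:
\begin{itemize}
\item The paper's route is purely algebraic once the four values are known, and it explains why \eqref{eq:37} is included in the lemma at all.
\item Your route makes \eqref{eq:38}--\eqref{eq:39} independent of \eqref{eq:36}--\eqref{eq:37}.
\end{itemize}
Your separate $J=2$ check of \eqref{eq:39} is in fact automatic, since \eqref{eq:14} also holds for $k=0$.
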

\begin{proof}
Recalling definition \eqref{eq:13}, we readily obtain
\begin{equation*}
\begin{aligned}
    U_J(\cos x_J) & = \frac{\sin((J+1) x_J)}{\sin x_J} = \frac{\sin(\pi)}{\sin x_J} = 0,
    \\ U_{J-1}(\cos x_J) & = \frac{\sin(J x_J)}{\sin x_J} = \frac{\sin(\pi-x_J)}{\sin x_J} = 1,
    \\ U_{J-2}(\cos x_J) & = \frac{\sin((J-1)x_J)}{\sin x_J} = \frac{\sin(\pi-2x_J)}{\sin x_J} = 2\cos x_J,
    \\ U_{J-3}(\cos x_J) & = \frac{\sin((J-2)x_J)}{\sin x_J} = \frac{\sin(\pi-3x_J)}{\sin x_J} = 4\cos^2 x_J - 1.
\end{aligned}
\end{equation*}
Exploiting the recurrence relation \cite[Ex. 15 Ch. 2]{Mason2002}
\begin{equation*}
    U_k'(y) = \frac{(k+2) U_{k-1}(y) - k\, U_{k+1}(y)}{2(1-y^2)},
\end{equation*}
we compute with \eqref{eq:36} and \eqref{eq:34}
\begin{equation*}
    U'_{J-1}(\cos x_J) = \frac{(J+1) U_{J-2}(\cos x_J) - (J-1) U_J(\cos x_J)}{2(1-\cos^2 x_J)} = \frac{(J+1)\cos x_J}{1-\cos^2 x_J}.
\end{equation*}
Similarly, we compute with \eqref{eq:37} and \eqref{eq:35}
\begin{equation*}
    U'_{J-2}(\cos x_J) = \frac{J\, U_{J-3}(\cos x_J) - (J-2) U_{J-1}(\cos x_J)}{2(1-\cos^2 x_J)} = \frac{2J\cos^2 x_J - J + 1}{1-\cos^2 x_J}.
\end{equation*}
\end{proof}
\begin{lemma} \label{lem:A.4}
Let $J \ge 2$ be an integer and $x_J = \displaystyle\frac{\pi}{J+1}$. Then, it holds true
\begin{equation*}
    \sum_{k=1}^{J-2} U'_k(\cos x_J) = \frac{2(J+1)\cos^2 x_J - (J-1)(1+\cos x_J)}{2(1-\cos x_J)^2(1+\cos x_J)}.
\end{equation*}
\end{lemma}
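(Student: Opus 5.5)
Differentiate identity \eqref{eq:32} of the first lemma of the appendix with respect to $y$, then evaluate at $y=\cos x_J$ using the values in the preceding lemma.

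\textbf{Step 1: differentiate \eqref{eq:32}.} Since $U_0'=0$, the left-hand side becomes $\sum_{k=1}^{J-2}U_k'(y)$. Writing $N(y)=1-U_{J-1}(y)+U_{J-2}(y)$, the quotient rule gives
\begin{equation*}
    \sum_{k=1}^{J-2} U'_k(y) = \frac{N'(y)}{2(1-y)} + \frac{N(y)}{2(1-y)^2}, \qquad y\in[-1,1).
\end{equation*}

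\textbf{Step 2: evaluate at $y=\cos x_J$.} This point lies in $(-1,1)$, so the formula applies. By \eqref{eq:35}--\eqref{eq:36},
\begin{equation*}
    N(\cos x_J)=1-1+2\cos x_J=2\cos x_J.
\end{equation*}
By \eqref{eq:38}--\eqref{eq:39},
\begin{equation*}
    N'(\cos x_J)=-U'_{J-1}+U'_{J-2}=\frac{2J\cos^2x_J-(J+1)\cos x_J-J+1}{1-\cos^2x_J}.
\end{equation*}

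\textbf{Step 3: combine over a common denominator.} Put everything over $2(1-\cos x_J)^2(1+\cos x_J)$. The numerator is
\begin{equation*}
    (1-\cos x_J)\,\bigl(2J c^2-(J+1)c-J+1\bigr) + 2c(1-c^2),
\end{equation*}
where we write $c=\cos x_J$ for brevity, and we must check that it equals $2(J+1)c^2-(J-1)(1+c)$.

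This is the only potential obstacle, and it is purely algebraic. Expanding in powers of $c$:
\begin{itemize}
    \item the $c^3$ terms cancel, since $-2J+2-2=-2J$ is offset;
    \item the $c^2$ terms give $2J+(J+1)=3J+1$;
    \item the $c$ terms give $-(J+1)+J-1+2=0$;
    \item the constant term is $-J+1$.
\end{itemize}
So the numerator is $(3J+1)c^2-(J-1)$ up to the $c^3$ check, and it has to be matched against the target numerator $2(J+1)c^2-(J-1)c-(J-1)$.

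These do not agree term-by-term: the $c^2$ coefficients differ, and the target contains $c^3$ and $c$ terms. I would therefore recheck the expansion with care and look for a missed contribution. If the discrepancy persists, I would use the extra relation valid at $x_J$, namely $U_J(\cos x_J)=0$ from \eqref{eq:34}. In this form it is the trigonometric identity $\sin((J+1)x_J)=0$, which does not give a polynomial relation in $c$ with $J$-free coefficients. In that case a direct trigonometric verification would be more reliable: express each side via $\sin$ and $\cos$ of $x_J$, using \eqref{eq:14} and $\sin(\pi-\cdot)$ reductions, and compare.

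Before doing either, I would sanity-check the claimed formula at $J=2$ and $J=3$ numerically, to pin down which expansion is wrong.
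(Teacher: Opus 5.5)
\textbf{Gap in Step 3.} Your strategy is exactly the paper's: differentiate \eqref{eq:32}, use $U_0'=0$, and evaluate at $y=\cos x_J$ with \eqref{eq:35}, \eqref{eq:36}, \eqref{eq:38}, \eqref{eq:39}. Steps 1 and 2 are correct. The argument breaks in Step 3, and the ``discrepancy'' you report comes from an algebra slip, not from a real obstruction.

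Write $P=2Jc^2-(J+1)c-(J-1)$. Then $N'(c)=P/\bigl((1-c)(1+c)\bigr)$ already carries one factor $(1-c)$. Hence
\[
\frac{N'}{2(1-c)}=\frac{P}{2(1-c)^2(1+c)}, \qquad \frac{N}{2(1-c)^2}=\frac{2c(1+c)}{2(1-c)^2(1+c)}.
\]
Over the common denominator $2(1-c)^2(1+c)$, the two terms contribute $P$ and $2c(1+c)$. They do not contribute $(1-c)P$ and $2c(1-c^2)$. Your numerator $(1-c)P+2c(1-c^2)$ is the numerator over $2(1-c)^3(1+c)$ instead.

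You also mis-expanded it: the $c^3$ terms do not cancel but add up to $-2(J+1)c^3$. So your expression actually equals
\[
-2(J+1)c^3+(3J+1)c^2-(J-1)=(1-c)\bigl(2(J+1)c^2-(J-1)c-(J-1)\bigr),
\]
which is exactly $(1-c)$ times the target numerator. That is consistent with the extra factor in your denominator.

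With the correct numerator you get
\[
P+2c(1+c)=2(J+1)c^2-(J-1)c-(J-1)=2(J+1)c^2-(J-1)(1+c),
\]
which is the claim. The paper does the same computation with everything doubled, over $4(1-c)^2(1+c)$.

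So you need no extra relation such as $U_J(\cos x_J)=0$, no trigonometric re-derivation, and no numerical check. As written, however, the proposal does not prove the lemma. It ends by asserting, incorrectly, that the two sides disagree, and offers only contingency plans. Fixing the common-denominator step is what closes the proof.
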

\begin{proof}
Differentiating \eqref{eq:32} and using $U'_0(y) = 0$,
\begin{equation*}
    \sum_{k=1}^{J-2} U'_k(y) = \frac{2(1-y)\bigl(U'_{J-2}(y) - U'_{J-1}(y)\bigr) + 2\bigl(1 - U_{J-1}(y) + U_{J-2}(y)\bigr)}{4(1-y)^2}.
\end{equation*}
At $y = \cos x_J$, identities \eqref{eq:35}, \eqref{eq:36}, \eqref{eq:38} and \eqref{eq:39} give
\begin{equation*}
\begin{aligned}
    \sum_{k=1}^{J-2} U'_k(\cos x_J) & = \frac{2\bigl(2J\cos^2 x_J - (J+1)\cos x_J - (J-1)\bigr) + 4\cos x_J (1+\cos x_J)}{4(1-\cos x_J)^2(1+\cos x_J)}
\end{aligned}
\end{equation*}
which gives the claim.
\end{proof}
\begin{lemma} \label{lem:A.5}
Let $J \ge 2$ be an integer and $x_J = \displaystyle\frac{\pi}{J+1}$. Then, it holds true
\begin{equation*}
    \sum_{k=1}^{J-2} (-1)^k U'_k(\cos x_J) = \frac{(-1)^J (J+1)(2\cos x_J - 1)}{2(1-\cos x_J)(1+\cos x_J)} + \frac{(-1)^J - 1}{2(1+\cos x_J)^2}.
\end{equation*}
\end{lemma}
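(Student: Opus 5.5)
Mirroring Lemma~\ref{lem:A.4}, I would differentiate the alternating-sum identity \eqref{eq:33} with respect to $y$ and then evaluate at $y=\cos x_J$ using the special values in \eqref{eq:35}, \eqref{eq:36}, \eqref{eq:38} and \eqref{eq:39}. Since $U_0'\equiv 0$, the $k=0$ term drops out and the derivative of the left-hand side of \eqref{eq:33} is exactly $\sum_{k=1}^{J-2}(-1)^kU_k'(y)$. Write $N(y)=1+(-1)^J\bigl(U_{J-1}(y)+U_{J-2}(y)\bigr)$. The quotient rule applied to the right-hand side gives
\begin{equation*}
\sum_{k=1}^{J-2}(-1)^kU'_k(y)=\frac{(-1)^J\bigl(U'_{J-1}(y)+U'_{J-2}(y)\bigr)}{2(1+y)}-\frac{N(y)}{2(1+y)^2}.
\end{equation*}
This is valid on $(-1,1]$, and $\cos x_J\in(-1,1)$, so we may evaluate there.

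At $y=\cos x_J$, \eqref{eq:35} and \eqref{eq:36} give $N=1+(-1)^J(1+2\cos x_J)$. Hence the second term becomes
\begin{equation*}
-\frac{1+(-1)^J}{2(1+\cos x_J)^2}-\frac{(-1)^J\, 2\cos x_J}{2(1+\cos x_J)^2}.
\end{equation*}
For the first term, adding \eqref{eq:38} and \eqref{eq:39} gives
\begin{equation*}
U'_{J-1}+U'_{J-2}=\frac{2J\cos^2x_J+(J+1)\cos x_J-(J-1)}{(1-\cos x_J)(1+\cos x_J)}.
\end{equation*}
The plan is to factor this numerator. Its roots in $c=\cos x_J$ are $c=-1$ and $c=(J-1)/(2J)$, so it should equal $(1+c)\bigl(2Jc-(J-1)\bigr)$. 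That factor $(1+c)$ cancels one power of $1+c$ in the denominator.

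The remaining step, and the only real bookkeeping obstacle, is to combine the pieces over the common denominator $2(1-c)(1+c)^2$ and rewrite the result in the target form. That form has one term carrying the factor $(J+1)(2c-1)$ and a residual $\bigl((-1)^J-1\bigr)/\bigl(2(1+c)^2\bigr)$. I would collect all terms proportional to $(-1)^J$ separately from the constant $-1/(2(1+c)^2)$. The constant already matches the $-1$ in the residual. So it remains to verify that the $(-1)^J$-coefficient equals
\begin{equation*}
\frac{(J+1)(2c-1)}{2(1-c)(1+c)}+\frac{1}{2(1+c)^2}.
\end{equation*}
Multiplying through by $2(1-c)(1+c)^2$ reduces this to a polynomial identity in $c$ and $J$. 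The $(1-c)$ from the residual must combine with the $-(1+2c)(1-c)$ and $(2Jc-(J-1))(1+c)$ contributions, and I would check it by expanding both sides. If a discrepancy appears, I would double-check the sign conventions of \eqref{eq:33} at small $J$, e.g.\ $J=2$, where the sum is empty and the formula must give $0$, and $J=3$, where it gives $-U_1'=-2$. I would use these cases to sanity-check the final expression.
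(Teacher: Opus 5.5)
Your proposal is correct and is essentially the paper's proof: differentiate \eqref{eq:33}, evaluate at $y=\cos x_J$ using \eqref{eq:35}, \eqref{eq:36}, \eqref{eq:38}, \eqref{eq:39}, and collect the $(-1)^J$ terms over the common denominator $2(1-\cos x_J)(1+\cos x_J)^2$. Your factorization $2Jc^2+(J+1)c-(J-1)=(1+c)\bigl(2Jc-(J-1)\bigr)$ is only a cosmetic shortcut, and the final polynomial check does close: after multiplying through, the difference of the two sides reduces to $2(1+c)(1-c)-2(1-c)(1+c)=0$.
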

\begin{proof}
Differentiating \eqref{eq:33} and using $U'_0(y) = 0$, we obtain
\begin{equation*}
    \sum_{k=1}^{J-2}(-1)^k U'_k(y) = \frac{(-1)^J\bigl(U'_{J-1}(y) + U'_{J-2}(y)\bigr) 2(1+y) - 2\bigl(1 + (-1)^J(U_{J-1}(y) + U_{J-2}(y))\bigr)}{4(1+y)^2}.
\end{equation*}
At $y = \cos x_J$, identities \eqref{eq:35}, \eqref{eq:36}, \eqref{eq:38} and \eqref{eq:39} yield
\begin{equation*}
\begin{aligned}
    \sum_{k=1}^{J-2}(-1)^k U'_k(\cos x_J) & = \frac{(-1)^J\bigl(J(\cos x_J + 2\cos^2 x_J - 1) + 2\cos^2 x_J\bigr) - (1-\cos x_J)}{2(1-\cos x_J)(1+\cos x_J)^2}
    \\ & = \frac{(-1)^J\bigl((J+1)(2\cos x_J-1)(1+\cos x_J) + 1 - \cos x_J \bigr) - (1-\cos x_J)}{2(1-\cos x_J)(1+\cos x_J)^2},
\end{aligned}
\end{equation*}
which gives the claim.
\end{proof}

\bibliographystyle{plain} 
\bibliography{bibliography}

@article{MelenkNick2024,
      title={Parsimonious convolution quadrature}, 
      author={Melenk, J. M. and Nick, J.},
      year={2024},
      journal = {Preprint. arXiv: 2410.15079},
      url={https://arxiv.org/abs/2410.15079}, 
}

@article{SchadleLopezFernandezLubich2006,
  author  = {Sch\"adle, A. and L\'opez-Fern\'andez, M. and Lubich, C.},
  title   = {Fast and Oblivious Convolution Quadrature},
  journal = {SIAM J. Sci. Comput.},
  volume  = {28},
  number  = {2},
  pages   = {421--438},
  year    = {2006},
  doi     = {10.1137/050623139}
}

@article{BanjaiFerrari2026,
  author  = {Banjai, L. and Ferrari, M.},
  title   = {Generalized convolution quadrature based on the trapezoidal rule},
  journal = {IMA J. Numer. Anal.},
  year    = {2026},
}

@article {ErusluSayas2020,
    AUTHOR = {Eruslu, H. and Sayas, F.-J.},
     TITLE = {Polynomially bounded error estimates for trapezoidal rule
              convolution quadrature},
   JOURNAL = {Comput. Math. Appl.},
    VOLUME = {79},
      YEAR = {2020},
    NUMBER = {6},
     PAGES = {1634--1643},
}

@article {BanjaiLubich2011,
    AUTHOR = {Banjai, L. and Lubich, C.},
     TITLE = {An error analysis of {R}unge-{K}utta convolution quadrature},
   JOURNAL = {BIT Numer. Math.},
    VOLUME = {51},
      YEAR = {2011},
    NUMBER = {3},
     PAGES = {483--496},
}

@article {LubichOstermann1993,
    AUTHOR = {Lubich, C. and Ostermann, A.},
     TITLE = {Runge-{K}utta methods for parabolic equations and convolution
              quadrature},
   JOURNAL = {Math. Comp.},
    VOLUME = {60},
      YEAR = {1993},
    NUMBER = {201},
     PAGES = {105--131},
}

@article {Lubich2004,
    AUTHOR = {Lubich, C.},
     TITLE = {Convolution quadrature revisited},
   JOURNAL = {BIT Numer. Math.},
    VOLUME = {44},
      YEAR = {2004},
    NUMBER = {3},
     PAGES = {503--514},
}

@article {Banjai2010,
    AUTHOR = {Banjai, L.},
     TITLE = {Multistep and multistage convolution quadrature for the wave
              equation: algorithms and experiments},
   JOURNAL = {SIAM J. Sci. Comput.},
    VOLUME = {32},
      YEAR = {2010},
    NUMBER = {5},
     PAGES = {2964--2994},
}

@article{BanjaiFerrari2022,
    AUTHOR  = {Banjai, L. and Ferrari, M.},
    TITLE   = {Runge--{K}utta convolution quadrature based on {G}auss methods},
    JOURNAL = {Numer. Math.},
    FJOURNAL = {Numerische Mathematik},
    VOLUME  = {156},
    NUMBER  = {5},
    PAGES   = {1719--1750},
    YEAR    = {2024},
    DOI     = {10.1007/s00211-024-01429-4},
}

@article {Lubich1994,
    AUTHOR = {Lubich, C.},
     TITLE = {On the multistep time discretization of linear
              initial-boundary value problems and their boundary integral
              equations},
   JOURNAL = {Numer. Math.},
    VOLUME = {67},
      YEAR = {1994},
    NUMBER = {3},
     PAGES = {365--389},
}

@article {BanjaiLubichMelenk2011,
    AUTHOR = {Banjai, L. and Lubich, C. and Melenk, J. M.},
     TITLE = {Runge-{K}utta convolution quadrature for operators arising in
              wave propagation},
   JOURNAL = {Numer. Math.},
    VOLUME = {119},
      YEAR = {2011},
    NUMBER = {1},
     PAGES = {1--20},
}

@article {Lubich1988b,
    AUTHOR = {Lubich, C.},
     TITLE = {Convolution quadrature and discretized operational calculus.
              {II}},
   JOURNAL = {Numer. Math.},
    VOLUME = {52},
      YEAR = {1988},
    NUMBER = {4},
     PAGES = {413--425},
}

@article {Lubich1988a,
    AUTHOR = {Lubich, C.},
     TITLE = {Convolution quadrature and discretized operational calculus.
              {I}},
   JOURNAL = {Numer. Math.},
    VOLUME = {52},
      YEAR = {1988},
    NUMBER = {2},
     PAGES = {129--145},
}

@article {LopezFernandezSauter2013,
    AUTHOR = {Lopez-Fernandez, M. and Sauter, S.},
     TITLE = {Generalized convolution quadrature with variable time
              stepping},
   JOURNAL = {IMA J. Numer. Anal.},
    VOLUME = {33},
      YEAR = {2013},
    NUMBER = {4},
     PAGES = {1156--1175},
}

@book {BanjaiSayas2022,
    AUTHOR = {Banjai, L. and Sayas, F.-J.},
     TITLE = {Integral Equation Methods for Evolutionary PDE: A Convolution Quadrature Approach},
    SERIES = {Springer Series in Computational Mathematics},
    VOLUME = {59},
 PUBLISHER = {Springer},
   ADDRESS = {Cham},
      YEAR = {2022},
}

@article {LopezFernandezSauter2015a,
    AUTHOR = {Lopez-Fernandez, M. and Sauter, S.},
     TITLE = {Generalized convolution quadrature with variable time
              stepping. {P}art {II}: {A}lgorithm and numerical results},
   JOURNAL = {Appl. Numer. Math.},
    VOLUME = {94},
      YEAR = {2015},
     PAGES = {88--105},
}

@book{GrahamKnuthPatashnik1994,
    AUTHOR = {Graham, R. L. and Knuth, D. E. and Patashnik, O.},
     TITLE = {Concrete Mathematics: A Foundation for Computer Science},
   EDITION = {2},
 PUBLISHER = {Addison-Wesley},
   ADDRESS = {Reading, MA},
      YEAR = {1994},
}

@book{Rudin1987,
    AUTHOR = {Rudin, W.},
     TITLE = {Real and Complex Analysis},
   EDITION = {3},
 PUBLISHER = {McGraw-Hill},
   ADDRESS = {New York},
      YEAR = {1987},
}

@article{egervary-szasz-1928,
  author  = {Egerv{\'a}ry, E. v. and Sz{\'a}sz, O.},
  title   = {Einige Extremalprobleme im Bereiche der trigonometrischen
             Polynome},
  journal = {Math. Z.},
  volume  = {27},
  year    = {1928},
  pages   = {641--652}
}

@article{fejer1915uber,
    AUTHOR = {Fej{\'e}r, L.},
     TITLE = {{\"U}ber trigonometrische {P}olynome},
   JOURNAL = {J. Reine Angew. Math.},
    VOLUME = {146},
     PAGES = {53--82},
      YEAR = {1915},
}

@article{dahlquist1963special,
    AUTHOR = {Dahlquist, G.},
     TITLE = {A special stability problem for linear multistep methods},
   JOURNAL = {BIT Numer. Math.},
    VOLUME = {3},
    NUMBER = {1},
     PAGES = {27--43},
      YEAR = {1963},
}

@book{Mason2002,
    AUTHOR = {Mason, J. C. and Handscomb, D. C.},
     TITLE = {Chebyshev Polynomials},
 PUBLISHER = {CRC Press},
   ADDRESS = {Boca Raton},
      YEAR = {2002},
}

@article {Banjai2022,
    AUTHOR = {Banjai, L.},
     TITLE = {Implicit/explicit, {BEM}/{FEM} coupled scheme for acoustic
              waves with the wave equation in the second order formulation},
   JOURNAL = {Comput. Methods Appl. Math.},
    VOLUME = {22},
      YEAR = {2022},
    NUMBER = {4},
     PAGES = {757--773},
}

\end{document}